\documentclass[11pt]{amsart}
\usepackage{}
\usepackage{amssymb}
\usepackage{amsfonts}
\usepackage{mathrsfs}
\usepackage{latexsym}
\usepackage{graphicx}
\usepackage{amscd,amssymb,amsmath,amsbsy,amsthm,amsfonts}
\usepackage[all]{xy}
\usepackage[colorlinks,plainpages,backref,urlcolor=blue]{hyperref}
\usepackage{verbatim}
\usepackage{enumerate}

\usepackage [english]{babel}
\usepackage [autostyle, english = american]{csquotes}
\MakeOuterQuote{"}

\topmargin=0.0in
\textwidth=6.5in
\textheight=8.5in
\oddsidemargin=0in
\evensidemargin=0in

\newcommand {\Omit}[1]{}

\usepackage{tikz-cd}

\usepackage{tikz}
\usetikzlibrary{automata,positioning}

\usetikzlibrary{shapes,shadows}

\usetikzlibrary{arrows,calc}
\tikzset{
>=stealth',
help lines/.style={dashed, thick},
axis/.style={<->},
important line/.style={thick},
connection/.style={thick, dotted},
}
\usetikzlibrary{patterns}
\newlength{\hatchspread}
\newlength{\hatchthickness}
\tikzset{hatchspread/.code={\setlength{\hatchspread}{#1}},
         hatchthickness/.code={\setlength{\hatchthickness}{#1}}}
\tikzset{hatchspread=3pt,
         hatchthickness=0.4pt}
\pgfdeclarepatternformonly[\hatchspread,\hatchthickness]
   {custom north west lines}
   {\pgfqpoint{-2\hatchthickness}{-2\hatchthickness}}
   {\pgfqpoint{\dimexpr\hatchspread+2\hatchthickness}{\dimexpr\hatchspread+2\hatchthickness}}
   {\pgfqpoint{\hatchspread}{\hatchspread}}
   {
    \pgfsetlinewidth{\hatchthickness}
    \pgfpathmoveto{\pgfqpoint{0pt}{\hatchspread}}
    \pgfpathlineto{\pgfqpoint{\dimexpr\hatchspread+0.15pt}{-0.15pt}}
    \pgfusepath{stroke}
   }

\newcommand{\nc}{\newcommand}
\nc{\rnc}{\renewcommand}
\nc{\bb}[1]{{\mathbb #1}}
\nc{\bbA}{\bb{A}}\nc{\bbB}{\bb{B}}\nc{\bbC}{\bb{C}}\nc{\bbD}{\bb{D}}
\nc{\bbE}{\bb{E}}\nc{\bbF}{\bb{F}}\nc{\bbG}{\bb{G}}\nc{\bbH}{\bb{H}}
\nc{\bbI}{\bb{I}}\nc{\bbJ}{\bb{J}}\nc{\bbK}{\bb{K}}\nc{\bbL}{\bb{L}}
\nc{\bbM}{\bb{M}}\nc{\bbN}{\bb{N}}\nc{\bbO}{\bb{O}}\nc{\bbP}{\bb{P}}
\nc{\bbQ}{\bb{Q}}\nc{\bbR}{\bb{R}}\nc{\bbS}{\bb{S}}\nc{\bbT}{\bb{T}}
\nc{\bbU}{\bb{U}}\nc{\bbV}{\bb{V}}\nc{\bbW}{\bb{W}}\nc{\bbX}{\bb{X}}
\nc{\bbY}{\bb{Y}}\nc{\bbZ}{\bb{Z}}
\nc{\mbf}[1]{{\mathbf #1}}
\nc{\bfA}{\mbf{A}}\nc{\bfB}{\mbf{B}}\nc{\bfC}{\mbf{C}}\nc{\bfD}{\mbf{D}}
\nc{\bfE}{\mbf{E}}\nc{\bfF}{\mbf{F}}\nc{\bfG}{\mbf{G}}\nc{\bfH}{\mbf{H}}
\nc{\bfI}{\mbf{I}}\nc{\bfJ}{\mbf{J}}\nc{\bfK}{\mbf{K}}\nc{\bfL}{\mbf{L}}
\nc{\bfM}{\mbf{M}}\nc{\bfN}{\mbf{N}}\nc{\bfO}{\mbf{O}}\nc{\bfP}{\mbf{P}}
\nc{\bfQ}{\mbf{Q}}\nc{\bfR}{\mbf{R}}\nc{\bfS}{\mbf{S}}\nc{\bfT}{\mbf{T}}
\nc{\bfU}{\mbf{U}}\nc{\bfV}{\mbf{V}}\nc{\bfW}{\mbf{W}}\nc{\bfX}{\mbf{X}}
\nc{\bfY}{\mbf{Y}}\nc{\bfZ}{\mbf{Z}}
\nc{\bfa}{\mbf{a}}\nc{\bfb}{\mbf{b}}\nc{\bfc}{\mbf{c}}\nc{\bfd}{\mbf{d}}
\nc{\bfe}{\mbf{e}}\nc{\bff}{\mbf{f}}\nc{\bfg}{\mbf{g}}\nc{\bfh}{\mbf{h}}
\nc{\bfi}{\mbf{i}}\nc{\bfj}{\mbf{j}}\nc{\bfk}{\mbf{k}}\nc{\bfl}{\mbf{l}}
\nc{\bfm}{\mbf{m}}\nc{\bfn}{\mbf{n}}\nc{\bfo}{\mbf{o}}\nc{\bfp}{\mbf{p}}
\nc{\bfq}{\mbf{q}}\nc{\bfr}{\mbf{r}}\nc{\bfs}{\mbf{s}}\nc{\bft}{\mbf{t}}
\nc{\bfu}{\mbf{u}}\nc{\bfv}{\mbf{v}}\nc{\bfw}{\mbf{w}}\nc{\bfx}{\mbf{x}}
\nc{\bfy}{\mbf{y}}\nc{\bfz}{\mbf{z}}

\newcommand{\op}{\text{op}}

\nc{\mcal}[1]{{\mathcal #1}}
\nc{\calA}{\mcal{A}}\nc{\calB}{\mcal{B}}\nc{\calC}{\mcal{C}}\nc{\calD}{\mcal{D}}
\nc{\calE}{\mcal{E}} \nc{\calF}{\mcal{F}}\nc{\calG}{\mcal{G}}\nc{\calH}{\mcal{H}}
\nc{\calI}{\mcal{I}}\nc{\calJ}{\mcal{J}}\nc{\calK}{\mcal{K}}\nc{\calL}{\mcal{L}}
\nc{\calM}{\mcal{M}}\nc{\calN}{\mcal{N}}\nc{\calO}{\mcal{O}}\nc{\calP}{\mcal{P}}
\nc{\calQ}{\mcal{Q}}\nc{\calR}{\mcal{R}}\nc{\calS}{\mcal{S}}\nc{\calT}{\mcal{T}}
\nc{\calU}{\mcal{U}}\nc{\calV}{\mcal{V}}\nc{\calW}{\mcal{W}}\nc{\calX}{\mcal{X}}
\nc{\calY}{\mcal{Y}}\nc{\calZ}{\mcal{Z}}
\nc{\fA}{\frak{A}}\nc{\fB}{\frak{B}}\nc{\fC}{\frak{C}} \nc{\fD}{\frak{D}}
\nc{\fE}{\frak{E}}\nc{\fF}{\frak{F}}\nc{\fG}{\frak{G}}\nc{\fH}{\frak{H}}
\nc{\fI}{\frak{I}}\nc{\fJ}{\frak{J}}\nc{\fK}{\frak{K}}\nc{\fL}{\frak{L}}
\nc{\fM}{\frak{M}}\nc{\fN}{\frak{N}}\nc{\fO}{\frak{O}}\nc{\fP}{\frak{P}}
\nc{\fQ}{\frak{Q}}\nc{\fR}{\frak{R}}\nc{\fS}{\frak{S}}\nc{\fT}{\frak{T}}
\nc{\fU}{\frak{U}}\nc{\fV}{\frak{V}}\nc{\fW}{\frak{W}}\nc{\fX}{\frak{X}}
\nc{\fY}{\frak{Y}}\nc{\fZ}{\frak{Z}}
\nc{\fa}{\frak{a}}\nc{\fb}{\frak{b}}\nc{\fc}{\frak{c}} \nc{\fd}{\frak{d}}
\nc{\fe}{\frak{e}}\nc{\fFf}{\frak{f}}\nc{\fg}{\frak{g}}\nc{\fh}{\frak{h}}
\nc{\fri}{\frak{i}}\nc{\fj}{\frak{j}}\nc{\fk}{\frak{k}}\nc{\fl}{\frak{l}}
\nc{\fm}{\frak{m}}\nc{\fn}{\frak{n}}\nc{\fo}{\frak{o}}\nc{\fp}{\frak{p}}
\nc{\fq}{\frak{q}}\nc{\fr}{\frak{r}}\nc{\fs}{\frak{s}}\nc{\ft}{\frak{t}}
\nc{\fu}{\frak{u}}\nc{\fv}{\frak{v}}\nc{\fw}{\frak{w}}\nc{\fx}{\frak{x}}
\nc{\fy}{\frak{y}}\nc{\fz}{\frak{z}}

\newtheorem{theorem}{Theorem}[section]
\newtheorem{lemma}[theorem]{Lemma}

\theoremstyle{definition}

\newtheorem{example}[theorem]{Example}
\newtheorem{remark}[theorem]{Remark}

 \DeclareMathOperator{\id}{id}

 \DeclareMathOperator{\GL}{GL}

\DeclareMathOperator{\Hom}{{Hom}} 

\DeclareMathOperator{\sHom}{{\mathscr{H}om}}

 \DeclareMathOperator{\Lie}{Lie}
 \DeclareMathOperator{\tr}{tr}

\DeclareMathOperator{\Grass}{Grass}

\DeclareMathOperator{\Gm}{\bbG_m}

   \DeclareMathOperator{\sph}{sph}

      \DeclareMathOperator{\crit}{crit}

\DeclareMathOperator{\diag}{diag}

\DeclareMathOperator{\Crit}{Crit}

\DeclareMathOperator{\ad}{ad}

\DeclareMathOperator{\inc}{in}
\DeclareMathOperator{\out}{out}

\DeclareMathOperator{\Sh}{Sh}

\newcommand{\surj}{\twoheadrightarrow}
\newcommand{\inj}{\hookrightarrow}

\newcommand{\pt}{\text{pt}}

\newcommand{\C}{\bbC}

\newcommand{\Q}{\bbQ}
\newcommand{\N}{\bbN}

\DeclareMathOperator{\fac}{fac}
\DeclareMathOperator{\pr}{pr}

\DeclareMathOperator{\BM}{BM}

\newcount\cols
{\catcode`,=\active\catcode`|=\active
 \gdef\Young(#1){\hbox{$\vcenter
 {\mathcode`,="8000\mathcode`|="8000
  \def,{\global\advance\cols by 1 &}%
  \def|{\cr
        \multispan{\the\cols}\hrulefill\cr
        &\global\cols=2 }%
  \offinterlineskip\everycr{}\tabskip=0pt
  \dimen0=\ht\strutbox \advance\dimen0 by \dp\strutbox
  \halign
   {\vrule height \ht\strutbox depth \dp\strutbox##
    &&\hbox to \dimen0{\hss$##$\hss}\vrule\cr
    \noalign{\hrule}&\global\cols=2 #1\crcr
    \multispan{\the\cols}\hrulefill\cr%
   }
 }$}}
}

\setcounter{tocdepth}{1}

\begin{document}
\title{The cohomological Hall algebras of a preprojective algebra with symmetrizer}
\date{\today}

\author[Y.~Yang]{Yaping~Yang}
\address{The University of Melbourne,
School of Mathematics and Statistics,
813 Swanston Street, Parkville VIC 3010,
Australia}
\email{yaping.yang1@unimelb.edu.au}

\author[G.~Zhao]{Gufang~Zhao}
\address{The University of Melbourne,
School of Mathematics and Statistics,
813 Swanston Street, Parkville VIC 3010,
Australia}
\email{gufangz@unimelb.edu.au}

\subjclass[2010]{
Primary 16G20;  	
Secondary 
17B37,   
14F43.
}
\keywords{Quiver with potential, Hall algebra, Yangian, symmetrizable Cartan matrix.}

\maketitle
\begin{abstract}
This paper aims at a geometric realization of the Yangian of non-simply laced type in terms of quiver with potentials. 
For every quiver with symmetrizer, there is an extended quiver with superpotential, whose Jacobian algebra is the generalized preprojective algebra of Gei{\ss}, Leclerc, and Schr\"oer \cite{GLS}. We study the cohomological Hall algebra of Kontsevich and Soibelman associated to this quiver with potential. In particular, we prove a dimensional reduction result, and provide a shuffle formula of this cohomological Hall algebra. In the case when the quiver with symmetrizer comes from a symmetrizable Cartan matrix, we prove that this shuffle algebra satisfies the relations of the Yangian associated to this Cartan matrix. 
\end{abstract}

\section{Introduction}
For a Kac-Moody Lie algebra $\fg$, the Yangian  of $\fg$ is an affine type quantum algebras associated to $\fg$. In the case when $\fg$ is simply-laced, a geometric construction of Yangian is given using Borel-Moore homology of Nakajima quiver variety \cite{Var} based on earlier works of Nakajima \cite{Nak99}. The present paper aims to construct geometrically the Yangian of non-simply laced type, which was previously unknown. The key observation is that these algebras are obtained in the framework of  Kontsevich-Soibelman cohomological Hall algebras (COHA) for a quiver with potential, when the potential is so that 
the Jacobian algebra is the generalized preprojective algebra of Gei{\ss}, Leclerc, and Schr\"oer \cite{GLS}. The latter is known as a quiver with symmetrizer, hence the title of the present paper. This can be viewed as the first step in studying cohomology of non-simply laced Nakajima quiver variety.

Let  $Q=(I,H)$ be a quiver, with  $I$ the set of vertices, and $H$ the set of arrows. A \textit{symmetrizer} of the quiver $Q$ is a collection of positive integers
\[
L:=\{l_{ij}\in \mathbb{N} \mid i, j\in I, \text{and there is an arrow from $i$ to $j$}\}. 
\]
Associated to the quiver with symmetrizer $(Q, L)$, Gei{\ss}, Leclerc, and Schr\"oer defined a generalized preprojective algebra \cite{GLS}. 

Let $\widehat Q$ be the extended quiver of $Q$.
The set of vertices of  $\widehat Q$ is $I$, and the set of arrows is $H\sqcup H^{\op}\sqcup B$ with $H^{\op}$ in bijection with $H$, and for each $h\in H$, the corresponding arrow in $H^{\op}$, denoted by  $h^*$, is $h$ with orientation reversed. The set 
$B$ is $\{B_i\mid i\in I\}$, with $B_i$ an edge loop at the  vertex $i \in I$. Consider
the following potential of $\widehat Q$ 
\begin{equation*}
W^L:=\sum_{h\in H} \big(B_{\inc(h)}^{l_{\inc(h), \out(h)}}hh^*-B_{\out(h)}^{l_{\out(h), \inc(h)}}h^* h\big). 
\end{equation*}
The generalized preprojective algebra associated to $(Q, L)$  is the Jacobian algebra of $(\widehat Q, \tr(W^L))$ \cite{GLS}. 
The representation variety of the Jacobian algebra of $(\widehat Q, \tr(W^L))$ is the critical locus of $\tr W^L$. This variety has been studied in detail in {\it loc. cit.}.

The Jacobian algebra is naturally endowed with the structure of a DG-algebra \cite{Ginz}. 
In particular, the representation variety is naturally endowed with a complex of constructible sheaves, which is the vanishing cycle of the potential function $\tr(W^L)$. The cohomology of the representation variety valued in this vanishing cycle complex has the structure of an algebra, constructed by Kontsevich and Soibelman \cite{KS}, called the cohomological Hall algebra (COHA).
In the present paper, we study this cohomological Hall algebra.

In the case when the quiver with symmetrizer comes from a symmetrizable Cartan matrix,  
we prove that a localized form of the COHA gives the positive part of the Yangian associated to the corresponding symmetrizable Kac-Moody algebra. 
The precise statement is given as Theorem~\ref{thm:Yangian to sh}. 
The proof uses similar calculations as in \cite[\S~7]{YZ1} and \cite{YZ2}, where the latter significantly relies on \cite[Appendix~A]{D}. Along the way, we give a more general version of dimensional reduction result describing the COHA using Borel-Moore homology instead of critical cohomology. This version is useful in other settings (see, e.g., \cite{RSYZ2}).  We include a sketch of the proofs to make the present paper self-contained.

When the Cartan matrix is symmetric, and the symmetrizer $\{l_{ij}=l\}$ has the same order $l \geq 1$, the COHA is expected to be related to an $l$th zastava space from the work of Mirkovi\'c \cite[\S~3.4]{Mirk14}.This expectations is based on the  comparison of the tautological line bundle on Grassmannian and the sperical COHA in the $l=1$ case. 
The COHA, as well as the spherical subalgebra, are both coherent sheaves on the $I$-colored configuration space of points on $\bbC$, i.e., the moduli of finite subschemes of $\bbC \times I$. The $I$-colored configuration space has a map to the loop Grassmannian of the adjoint group via the Abel-Jacobi map. The
restriction of the tautological line bundle $\calO(1)$ via this map defines a line bundle with a \textit{locallity or factorization structure} \cite[Proposition 3.5.4]{MYZ}.  
The sheaf of sections of this line bundle is identified with the spherical COHA \cite[Proposition 3.5.2]{MYZ}, without torus equivariant parameters. Under this identification, the locality structure of the line bundle corresponds to the  algebra structure of spherical COHA.
 Turning on the equivariant parameters gives a quantization of this line bundle \cite[Section 4]{MYZ}. For general $l\geq 1$, we expect the COHA for general symmetrizer $l$ to give rise to a quantization of the line bundle $\calO(l)$, and therefore lead to a quantization of the homogeneous coordinate ring of the zastava space. 
We pursue this elsewhere. 
 
This quiver with potential and its relation to loop Grassmannian originates from the work of Nekrasov and Shatashvili \cite[\S~3.1.4]{NS}. 
There are already some mathematical work regarding the Jacobian algebra for a quiver with a symmetrizer \cite{Cec,CD}. 
We further expect that the cohomology of the moduli space of stable framed representations of this quiver with potential, valued in the vanishing cycle, gives a global Weyl module of the Yangian. This is the analogue of  Nakajima quiver variety for non-simply laced types. 
In the simply laced with equal symmetrizer  case, notably in the  work  of
Bykov and Zinn-Justin \cite{BZ}, the equivariant cohomology of moduli space of stable framed representations of this Jacobian algebra has been related to, in the $\fs\fl_2$-case, the $(l+1)$-th spin representation. 

\subsection*{Acknowledgements} This paper is motivated by a project in collaboration with Ivan Mirkovi\'c on the quantization of the homogeneous coordinate ring of the zastava space \cite{MYZ}, and is largely inspired by a talk of Paul Zinn-Justin in the Representation Theory Seminar at the University of Melbourne. The idea of doing dimensional reduction in the non-loop directions is suggested to us by Hiraku Nakajima. The authors thank the referee for a careful reading of the manuscript and suggestions to improve the paper. 

The two authors are partially supported by the Australian Research Council via the awards DE190101231 and  DE190101222 respectively.

\section{The cohomological Hall algebra}
In this section, we first review the cohomological Hall algebra (COHA) associated to a quiver with potential that was defined by Kontsevich and Soibelman \cite[Section 7]{KS} (see also \cite{D}). 
We then introduce the extended quiver with potential that is of particular interest in this paper. 
\subsection{The COHA}
Let $\Gamma=(\Gamma_0, \Gamma_1)$ be a quiver, where $\Gamma_0$ is the set of vertices, and $\Gamma_1$ the set of arrows. 
For each $h\in \Gamma_1$, let $\inc(h)$ be the incoming vertex and $\out(h)$ be the outgoing vertex. 
We denote the path algebra of $\Gamma$  by $\C \Gamma$. 
Let $W$ be a potential of $\Gamma$, that is, $W=\sum_{u} c_u u$ with $c_{u} \in \C$, and $u$'s are cycles in $\Gamma$. 

Given a cycle $u=a_1\dots a_n$ (here the cycles are considered up to cyclic order) and an arrow $a\in \Gamma_1$. The cyclic derivative is defined to be
\[
\frac{\partial u}{\partial a}=\sum_{i: a_i=a} a_{i+1}\dots a_n a_1\dots a_{i-1} \in \C \Gamma
\] as an element of $\C \Gamma$. We extend the cyclic derivative to the potential by linearity.

For any dimension vector $v=(v^i)_{i\in \Gamma_0}\in \bbN^{\Gamma_0}$, the representation space of $\Gamma$ with dimension vector $v$ is denoted by $\bold{M}_{\Gamma, v}$. That is, let $V=\{V^i\}_{i\in \Gamma_0}$ be a $|\Gamma_0|$-tuple of vector spaces so that $\dim(V^i)=v^i$.  Then, 
\[
\bold{M}_{\Gamma, v}:=\bigoplus_{h\in \Gamma_1}\Hom(V^{\out(h)},V^{\inc(h)}).
\]
The group $G_{v}:=\prod_{i\in \Gamma_0}\GL(V^i)$ acts on the representation space $\bold{M}_{\Gamma, v}$ via conjugation. 

For a quiver with potential $(\Gamma, W)$ and dimension vector $v\in \bbN^{\Gamma_0}$,  denote by
$\tr(W)_v$ the function on $\bold{M}_{\Gamma, v}$ given by the trace of the potential. Let $\Crit(\tr W_v)$ be the critical locus of $\tr W_v$. 
Let $\varphi_{\tr W_{v}}$ be the vanishing cycle complex on $\bold{M}_{\Gamma, v}$ associated to the function $\tr(W)_v$, which is supported on $\Crit(\tr W_v)$.
We refer the readers to \cite{KS} and \cite{D} for the definition and details of the  vanishing cycle complex. 
For a $G_v$-variety $X$, denote by $H_{c, G_v}^*(X)$ the equivariant cohomology with compact support. 
Let 
 $H_{c, G_v}^*(X)^\vee$ be its dual. 
 Similar for cohomology valued in a complex of sheaves.
We have an isomorphism 
$H_{c, G_v}^*(\bold{M}_{\Gamma, v}, \varphi_{\tr W_v})\cong H_{c, G_v}^*(\Crit(\tr W_v), \varphi_{\tr W_v})$.

Let
\[
\calH(\Gamma, W):=\bigoplus_{v\in \N^{\Gamma_0}}\calH(\Gamma, W)_v
= \bigoplus_{v\in \N^{\Gamma_0}}H_{c, G_v}^*(\Crit(\tr W_v), \varphi_{\tr W_v})^\vee .\] 
There is an algebra structure on $\calH(\Gamma, W)$ via Hall multiplications  \cite[Section 7.6]{KS}.  
The algebra $\calH(\Gamma, W)$ is called the COHA associated to the quiver with potential $(Q, W)$. 
 For a review of the COHA in the presence of a {\it cut}, see also \cite[\S~1.1, 1.2]{YZ2} and \cite{D}.

\subsection{The extended quiver with symmetrizer}\label{subsec:weights}
For a quiver $Q=(I,H)$, with  $I$ the set of vertices, and $H$ the set of arrows, 
we consider the extended quiver $\widehat Q$ as defined in the introduction.
\Omit{
Here the set of vertices of  $\widehat Q$ is $I$, and its set of arrows is $H\sqcup H^{\op}\sqcup B$ with $H^{\op}$ in bijection with $H$, and for each $h\in H$, the corresponding arrow in $H^{\op}$, denoted by  $h^*$, is $h$ with orientation reversed. The set 
$B$ is $\{B_i\mid i\in I\}$, with $B_i$ an edge loop at the  vertex $i \in I$.}

For any $i, j\in I$, denote the set of arrows of $Q$ from $i$ to $j$ by $\{i\to j\}$. 
A collection of integers
\[
L:=\{l_{ij}\in \mathbb{N} \mid i, j\in I, \text{and} \{i\to j\} \neq \emptyset\}
\] 
is called a symmetrizer of the quiver $Q$. 

For a quiver $Q$ with a symmetrizer $L$, we define
a potential of $\widehat Q$ to be
\begin{equation}\label{WL}
W^L:=\sum_{h\in H} \big(B_{\inc(h)}^{l_{\inc(h), \out(h)}}hh^*-B_{\out(h)}^{l_{\out(h), \inc(h)}}h^* h\big). 
\end{equation}
The cyclic derivatives of $W^L$ are
\begin{align}
&\frac{\partial W^L}{ \partial h}=h^* B_{\inc(h)}^{l_{\inc(h), \out(h)}}-B_{\out(h)}^{l_{\out(h), \inc(h)}}h^* \label{eq:derivative1}\\ 
&\frac{\partial W^L}{ \partial h^*}=B_{\inc(h)}^{l_{\inc(h), \out(h)}}h-hB_{\out(h)}^{l_{\out(h), \inc(h)}}\label{eq:derivative2}\\
&\frac{\partial W^L}{ \partial B_i}=\sum_{\{h\mid \inc(h)=i, \out(h)=j\}} \sum_{e=0}^{l_{i,j}-1} B_i^{l_{ij}-1-e}hh^* B_i^{e}
-\sum_{\{h\mid \out(h)=i, \inc(h)=j\}} \sum_{e=0}^{l_{i,j}-1} B_i^{l_{ij}-1-e}h^*h B_i^{e}\label{eq:derivative3}
\end{align}

We consider a {\it weight function} of $Q$: 
\begin{align*}
&\bfm: I \sqcup H\sqcup H^{\op} \to \bbZ, i\mapsto \bfm_i, h\mapsto \bfm_h, h^*\mapsto \bfm_{h^*}, \text{for $i\in I$, $h\in H$ and $h^*\in H^{\op}$}. 
\end{align*}
For each $v\in \bbN^I$, on the space $\bfM_{\widehat Q, v}$, in addition to the action of $\GL_v$, there is a 3-dimensional torus $(\Gm)^3$ action. 
Let $(z_1,z_2,z_3)$ be the  coordinates of $(\Gm)^3$, 
\[
(z_1,z_2,z_3)(h, h^*, B_i)=(z_1^{\bfm_h}h,  z_2^{\bfm_{h^*}}h^*, z_3^{\bfm_i}B_i).  
\]
so that $z_1$ and $z_2$ scale $h$ and $h^*$ respectively with weights $\bfm_h$ and ${\bfm}_{h^*}$   for each $h\in H$, and $z_3$ scales $B_i$ for each $i\in I$ with weight $\bfm_i$. 

The action of $(z_1,z_2,z_3)\in (\Gm)^3$ preserves the potential function $W^L$, if and only if 
\begin{equation}\label{assump1}
z_1^{\bfm_h}z_2^{\bfm_{h^*}} z_3^{\bfm_{\inc(h)} l_{\inc(h), \out(h)}}
=z_1^{\bfm_h}z_2^{\bfm_{h^*}}z_3^{\bfm_{\out(h)} l_{\out(h), \inc(h)}}
=1. 
\end{equation}
In particular, 
\begin{equation}\label{assump2}
\bfm_{i} l_{ij}=\bfm_{j} l_{ji}, \text{for any $i, j\in I$}
\end{equation}
In the present paper, we make the assumption that there exists such a function $\bfm$, such that the condition \eqref{assump2} holds. 

Let $\calD$ be a torus endowed with a group homomorphism $a: \calD\to (\Gm)^3$  such that any element $(z_1, z_2, z_3)$ in the image of $a$ satisfies the condition \eqref{assump1}. 
Thus, the torus $\calD$ acts on $\bfM_{\widehat Q, v}$ in a way which preserves the potential function $W^L$.

We now give an example of one choice of $\calD$ and $\{\bfm_h, \bfm_{h^*}, \bfm_{i}\mid i\in I, h\in H\}$ that satisfy the above assumptions. 
\begin{example}\label{ex:weightFunc}
Let $(Q, L)$ be a quiver with symmetrizer. Assume here $Q$ has no oriented cycles.  
Let $\{\bfm_i\in \mathbb{Z}, i\in I\}$ be a set of integers, such that the condition \eqref{assump2} holds. 
For a pair of vertices $i,j\in I$, denote  the number $\bfm_{i} l_{ij}=\bfm_{j} l_{ji}$ by $d$. 
Let $n$ be the number of arrows between $i$ and $j$ (only in one direction because of the acyclicity assumption) in $Q$. We fix a numbering $h_1, \cdots,  h_{n}$ of these arrows, and let 
\[
\bfm_{h_p}:=(n + 2 - 2p)d , \,\ \bfm_{h_p^*}:=(-n  + 2p)d, \,\ \text{for}\,\  p=1, \cdots, n. 
\]
Consider the embedding $\calD:=\Gm\inj (\Gm)^3, z\mapsto (z, z, z^{-2})$.
The assumption \eqref{assump1} follows from the equality ${\bfm_h} +{\bfm_{h^*}}-2\bfm_{\inc(h)} l_{\inc(h), \out(h)}=2d-2d=0$. This torus action will be used in the construction of Yangians associated to symmetrizable Cartan matrices in \S~\ref{sec:Yangian}.

\Omit{
Following Nakajima's construction of quantum affine algebra associated to $Q$, we use $\calD=\bbG_m$, the diagonal torus in $T=\bbG_m^2$  (see \cite[(2.7.1), (2.7.2)]{Nak99}). This satisfies Assumption~\eqref{eqn:AssumptionWeight}. 
Note that  the set of weights on the arrows between $s,t\in I$ in the extended quiver $\widehat Q$ is independent of the orientation chosen at the beginning. Moreover, if $(Q,L)$ corresponds to $(A,D)$ with level $d$, then the number $n=\frac{a}{d}$ is also equal to $\frac{-{a_{st}}}{l_s}=\frac{-{a_{ts}}}{l_t}$.
}
\end{example}
With $\calD$ satisfying condition \eqref{assump1}, the Hall multiplication of Kontsevich and Soibelman is equivariant with respect to this $\calD$-action. Therefore, we have the following equivariant COHA
\begin{equation}\label{eq:CoHA}
\calH_{\calD}(\widehat Q, W^L)=\bigoplus_{v\in \N^{I}}H_{c, G_v\times \calD}^*(\Crit(\tr W^L_v), \varphi_{\tr W_v})^\vee .
\end{equation}

\section{The shuffle algebra}
In this section, we define the shuffle algebra associated to the COHA $\calH(\widehat Q,  W^L)$. 
In the shuffle algebra considered in this section, there are quantization parameters $t_1, t_2, t_3$. 
Geometrically these parameters come from the torus $\calD\inj (\Gm)^3$ action on representation space of the quiver $\widehat Q$.

\subsection{The definition}
\label{shuffle}
To begin with, we fix some notations. 
Let $(Q,L)$ be a quiver with symmetrizer. We fix a weight function $\bfm: I \sqcup H\sqcup H^{\op} \to \bbZ$.
Let $v=(v^i)_{i\in I} \in \bbN^I$ be a dimension vector of $Q$ and $\fS_v:=\prod_{i\in I} \fS_{v^i}$ be the product of symmetric groups. 
There is a natural action of $\fS_v$ on the variables $\{ \lambda^i_s\mid  i\in I, s=1,\dots, v^i\}$ by permutation. 

For any pair $(p, q)$ of positive integers, let $\Sh(p,q)$ be the subset of $\fS_{p+q}$ consisting of $(p,q)$-shuffles (permutations of $\{1, \cdots, p+q\}$ that preserve the relative order of $\{1,\cdots ,p\}$ and $\{p+1,\cdots ,p+q \}$).
Given dimension vectors $v_1=(v_1^i)_{i\in I}, v_2=(v_2^i)_{i\in I}$ with $v=v_1+v_2$, let  $\Sh(v_1,v_2)\subset \fS_{v}$ denote the product $\prod_{i\in I}\Sh(v_1^i,v_2^i)$. 

We now define the shuffle algebra $\calS\calH$ associated to the data $(Q, L, \bfm)$. 
The shuffle algebra $\calS\calH$ is an $\bbN^I$-graded $\bbC[t_1,t_2, t_3]$-algebra. As a $\bbC[t_1,t_2, t_3]$-module, we have $\calS\calH=\bigoplus_{v\in\bbN^I}\calS\calH_v$. The degree $v$ piece is
\[\calS\calH_v:=\bbC[t_1,t_2, t_3]
[\lambda^i_s]_{i\in I, s=1,\dots, v^i}^{\fS_v}.\] 
We will also consider specializations of the equivariant parameters, i.e., algebraic homomorphisms  $\bbC[t_1,t_2, t_3]\to A$, for some algebra $A$. 
We assume the weight function $\bfm$  is compatible with the specialization in the sense that 
\begin{equation}\label{eqn:AssumptionWeight}
t_1{\bfm_h}+t_2{\bfm_{h^*}}+t_3{\bfm_{\inc(h)} l_{\inc(h), \out(h)}}
=t_1{\bfm_h}+t_2{\bfm_{h^*}}+t_3{\bfm_{\out(h)} l_{\out(h), \inc(h)}}
=0, \text{for any $h\in H$}. 
\end{equation}
Identifying $\bbC[t_1,t_2, t_3]$ with $H_{\Gm^3}(\pt)$, then in the presence of $a:\calD\to \Gm^3$, the specialization $\bbC[t_1,t_2, t_3]\to A$ can be taken to be $H_{\Gm^3}(\pt)\to H_{\calD}(\pt)=:A$ induced by $a$. Condition \eqref{eqn:AssumptionWeight} on the specialization is then equivalent to the condition \eqref{assump1} on $a$.

For any $v_1$ and $v_2\in \bbN^I$, we consider $\calS\calH_{v_1}\otimes \calS\calH_{v_2}$ as a $\bbC[t_1,t_2, t_3]$-submodule of \[\bbC[t_1,t_2, t_3][\lambda^i_j]_{i\in I, j=1,\dots, (v_1+v_2)^i}\] by sending $\lambda'^i_s$ to $\lambda^i_s$, and $\lambda''^i_t$ to $\lambda^i_{t+v_1^i}$. 
Here $\{\lambda'^i_s \mid {i\in I, s=1,\dots, v_1^i}\}$ and $\{\lambda''^i_s \mid {i\in I, s=1,\dots, v_2^i}\}$ are the variables of $\calS\calH_{v_1}$ and  $\calS\calH_{v_2}$ respectively. 
Define\footnote{The $\fac_1$ in the present paper and $\fac_1$ in \cite[\S 3.1]{YZ1} differ by a sign $(-1)^{\sum_{i\in I}v_1^iv_2^i}$. 
The shuffle formula in the present paper is deduced from a 3-dimensional COHA, while the one in \cite{YZ1} is obtained from a 2-dimensional COHA. 
This sign naturally occurs when comparing the dimensional reduction of a 3d COHA to a 2d COHA \cite[\S 5.1]{YZ2}.}
\begin{equation}\label{equ:fac1}
\fac_1:=\prod_{i\in I}\prod_{s=1}^{v_1^i}
\prod_{t=1}^{v_2^i}\frac{\lambda\rq{}\rq{}^i_t-\lambda\rq{}^i_s+\bfm_i t_3}{\lambda\rq{}\rq{}^i_t-\lambda\rq{}^i_s}. 
\end{equation}
and \begin{small}\begin{equation}
\label{equ:fac2}
\fac_2:=\prod_{h\in H}\Big(
\prod_{s=1}^{v_1^{\out(h)}}
\prod_{t=1}^{v_2^{\inc(h)}}
(\lambda_t^{'' \inc(h)}-\lambda_s^{'\out(h)}+ \bfm_h  t_1)
\prod_{s=1}^{v_1^{\inc(h)}}
\prod_{t=1}^{v_2^{\out(h)}}
(\lambda_t^{''\out(h)}-\lambda_s^{'\inc(h)}+\bfm_{h^*} t_2)
\Big).
\end{equation}\end{small}

The multiplication of $f_1(\lambda')\in \calS\calH_{v_1}$ and $f_2(\lambda'')\in \calS\calH_{v_2}$ is defined to be
\begin{equation}\label{shuffle formula}
f_1(\lambda')* f_2(\lambda''):=\sum_{\sigma\in\Sh(v_1,v_2)}\sigma(f_1\cdot f_2\cdot \fac_1\cdot \fac_2)\in \bbC[t_1,t_2, t_3][\lambda^i_j]_{i\in I, j=1,\dots, (v_1+v_2)^i}^{\fS_{v_1+v_2}}, 
\end{equation}
It is a direct algebraic computation that $\calS\calH$ endowed with the above multiplication is an associative algebra. 
Note that although $\fac_1$ has a denominator, the symmetrization over shuffle elements creates zeros on the numerator which cancels the poles introduced by the denominators. Therefore, the shuffle product is well-defined without introducing any localization.

\subsection{COHA and the shuffle algebra} \label{sec:shuffle}
In this section, we take into account the torus $\calD$-action on the representation space of $\widehat{Q}$. We compute $\calH_{\calD}(\widehat Q, W^L)$ in terms of the shuffle algebra in \S\ref{shuffle}. The main ingredient is the dimension reduction of Davison \cite{D} and \cite[Theorem 2.5]{YZ2} recalled in Appendix~\ref{sec:ReviewDimRed}. Here we follow the notations in Appendix~\ref{sec:ReviewDimRed}. 

We take $\Gamma$ to be $\widehat{Q}$, the potential to be $W^L$.    
We take the cut to be $H$, the set of arrows of the original quiver $Q$. 
Let $\widehat{Q}\setminus H$ be the quiver obtained from $\widehat{Q}$ by removing $H$. 
Hence the set of vertices of  $\widehat{Q}\setminus H$ is $I$, and its set of arrows is $H^{\op}\sqcup B=\{h^*, B_i\mid h^*\in H^{\op}, i\in I\}$. Consider the quotient of the path algebra $\bbC(\widehat{Q}\setminus H)$ by the relation 
\[
h^* B_{\inc(h)}^{l_{\inc(h), \out(h)}}=B_{\out(h)}^{l_{\out(h), \inc(h)}}h^*, \,\ \text{for any $h\in H$. }
\]
By \eqref{eq:derivative1}, the representation variety of this quotient algebra is then 
\begin{align*}
\bold{J}_{\widehat{Q}\setminus H, v}
:&= \{x \in \bold{M}_{\widehat{Q}\setminus H, v} \mid \partial W^L/ \partial h (x)=0,  \,\ \text{for any $h\in H$}\}
\\
&=\{(h^*, B_i)\in \bold{M}_{\widehat{Q}\setminus H, v} \mid h^* B_{\inc(h)}^{l_{\inc(h), \out(h)}}=B_{\out(h)}^{l_{\out(h), \inc(h)}}h^*, \,\ \text{for any $h\in H$}
 \}.
 \end{align*}
\begin{remark}\label{iso:loc}
Let $i: \bold{J}_{\widehat{Q}\setminus H, v} \times \bold{M}_{Q, v}\inj \bold{M}_{\widehat{Q}, v}$ be the natural embedding. Pushforward along $i$ gives \[
i_*:H^{\BM}_{G_v\times\calD}(\bold{J}_{\widehat{Q}\setminus H, v} \times \bold{M}_{Q, v}, \bbQ)\to
 H^{\BM}_{G_v\times\calD}(\bold{M}_{\widehat{Q}, v}, \bbQ),\] which is an isomorphism after localization. More precisely, take $T_v\subseteq G_v$ to be a maximal torus. Let $T=T_v\times\calD$. Restricting to $T$ gives a commutative diagram 
 \[
\xymatrix{
H^{\BM}_{G_v\times\calD}(\bold{J}_{\widehat{Q}\setminus H, v} \times \bold{M}_{Q, v}, \bbQ)\ar[r]^(0.6){i_*}\ar[d]&
 H^{\BM}_{G_v\times\calD}(\bold{M}_{\widehat{Q}, v}, \bbQ)\ar[d]\\
 H^{\BM}_{T}(\bold{J}_{\widehat{Q}\setminus H, v} \times \bold{M}_{Q, v}, \bbQ)\ar[r]^(0.6){i_*}&
 H^{\BM}_{T}(\bold{M}_{\widehat{Q}, v}, \bbQ)}
\]
The bottom row is a map of modules over $H_T^{\BM}(\pt)\cong \bbC[\Lie T]$. Let $\bbC(\Lie T)$ be the quotient field of $\bbC[\Lie T]$. By the equivariant localization \cite{GKM98}, the bottom map $i_*$  induces an isomorphism when base changed from $\bbC[\Lie T]$ to $\bbC(\Lie T)$. 
\end{remark}
In Appendix~\ref{sec:ReviewDimRed}, Theorem \ref{thm:Hall}, we show $\calH_{\calD}(\widehat Q, W^L)$ is canonically isomorphic to $ \oplus_{v}H^{\BM}_{G_v\times\calD}(\bold{J}_{\widehat{Q}\setminus H, v} \times \bold{M}_{Q, v}, \bbQ)$. 
\begin{theorem}\label{thm:shuffle}
The pushforward $i_*$ induces an algebra homomorphism $\calH_{\calD}(\widehat Q, W^L)\to \calS\calH$. It is an isomorphism after localization in the sense above.
\end{theorem}

\begin{proof}
With notations as in Appendix~\ref{sec:ReviewDimRed},
the first row of Diagram~\eqref{diag:j23} becomes 
\begin{equation*}
\xymatrix@R=1.5em   {
\bold{M}_{\widehat{Q}, v_1}\times \bold{M}_{\widehat{Q}, v_2} & 
(\bold{M}_{\widehat{Q} \backslash H, v_1}\times \bold{M}_{\widehat{Q} \backslash H, v_2})\oplus
\bold{M}_{Q, v_1, v_2}
\ar[l]_(0.6){p_1}\ar[r]^{i_1}
& 
(\bold{M}_{\widehat{Q} \backslash H, v_1}\times \bold{M}_{\widehat{Q} \backslash H, v_2}) \oplus 
\bold{M}_{Q, v}}\end{equation*}
The first row of \eqref{corresp with fiber} becomes 
\begin{equation*}
\xymatrix@R=1.5em@C=2.5em{
(\bold M_{\widehat{Q}\setminus H, v_1}) \times (\bold M_{\widehat{Q}\setminus H, v_2})
\ar@{^{(}->}[r]^(0.8){\iota}&\bold{Y}&
\bold M_{\widehat{Q}\setminus H, v_1, v_2} \ar[l]_(0.7){\omega} \ar[r]^{i_2}&
\bold M_{\widehat{Q}\setminus H, v}
}
 \end{equation*}
 with
 \begin{align*}
 \bold{Y}=\{(y_{h^*}, x_{h^*} , B)\mid  y_{h^*}\in &\bold M_{Q^{\op},v_1,v_2},  
 x_{h^*} \in  (\bold M_{Q^{\op},v_1}\times \bold M_{Q^{\op},v_2}), B\in  (\fg\fl_{v_1}\times \fg\fl_{v_2})
   \\& \pr(y_{h^*})=B_{\inc h}^{l_{\inc(h), \out(h)}}x^*_h-x^*_hB_{\out(h)}^{l_{\out(h), \inc(h)}}, \text{for any $h\in H$}\}, 
 \end{align*}
 where $\pr: \bold M_{Q^{\op},v_1,v_2}\to \bold M_{Q^{\op},v_1}\times \bold M_{Q^{\op},v_2}$ is the natural projection. 
Theorem~\ref{thm:Hall} gives an isomorphism of algebras
\[
\calH_{\calD}(\widehat Q, W^L)\cong \bigoplus_{v\in \bbN^{I}} H^{\BM}_{G_v\times\calD}(\bold{J}_{\widehat{Q}\setminus H, v} \times \bold{M}_{Q, v}, \bbQ). 
\]
The Hall multiplication $m^{\crit}$ of the former is identified as 
 \begin{equation}\label{eqn:shuffle_geom0}
(\overline{i_{2}}\times \id_{\bold{M}_{Q,v}} )_*  \circ \frac{1}{e(\iota)}(\omega\times \id_{\bold{M}_{Q,v}})_{\overline{\omega}\times \id_{\bold{M}_{C,v}}}^{\sharp} \circ \overline{i_{1}}_* \circ \overline{p_1}^{*}.
\end{equation}
of the latter. In particular, the latter is associative.

On $\bigoplus_{v\in \bbN^{I}} H^{\BM}_{G_v\times\calD}(\bold{M}_{\widehat{Q}, v}, \bbQ)$ we have the binary operation defined by 
 \begin{equation}\label{eqn:shuffle_geom}
({i_{2}}\times \id_{\bold{M}_{Q,v}} )_*  \circ (\omega\times \id_{\bold{M}_{Q,v}})^{*} \circ \frac{1}{e(\iota)} (\iota\times \id_{\bold{M}_{Q,v}})_{*} \circ  {i_{1}}_* \circ {p_1}^{*}.
\end{equation}
Recall $i: \bold{J}_{\widehat{Q}\setminus H, v} \times \bold{M}_{Q, v}\to \bold{M}_{\widehat{Q}, v}$ is the natural embedding. 
Pushforward along $i$ for each $v$ gives 
\[i_*:
\bigoplus_{v\in \bbN^{I}} H^{\BM}_{G_v}(\bold{J}_{\widehat{Q}\setminus H, v} \times \bold{M}_{Q, v}, \bbQ)\to
\bigoplus_{v\in \bbN^{I}} H^{\BM}_{G_v}(\bold{M}_{\widehat{Q}, v}, \bbQ),
\] 
which intertwines the above binary operations \eqref{eqn:shuffle_geom0} and \eqref{eqn:shuffle_geom}. 

Now we identify the target with $\calS\calH$, and \eqref{eqn:shuffle_geom} with \eqref{shuffle formula}. This follows from the same calculation  
of the Thom classes of $\iota$, $i_1$, and $i_2$ as in \cite[\S~1.4]{YZ1}. 
More precisely, the pushforward $\iota_*$ is the same as multiplication by $e(\iota)$. 
 The normal bundle to $i_1$ and $i_2$ are identified with  
 \begin{align*}
\sHom_{Q}(\calR(v_1), \calR(v_2))\,\ \text{and} \sHom_{\widehat{Q}\setminus H} (\calR(v_1), \calR(v_2))
\end{align*}
respectively, where $\calR(r)$ is the tautological bundle of $\Grass(r, \infty)$.
Let $e(i)$ be the equivariant Euler class of the normal bundle of $i$ and 
let $\{\lambda_{s}^i\}_{s=1, \dots, v^i}$ be the Chern roots of the tautological bundle $\calR(v^i)$. Thus, we have
\begin{align*}
e(i_1)e(i_2)=&\prod_{i\in I}\prod_{s=1}^{v_1^i}
\prod_{t=1}^{v_2^i}(\lambda\rq{}\rq{}^i_t-\lambda\rq{}^i_s+\bfm_i t_3)\\
\cdot &\prod_{h\in H}\Big(
\prod_{s=1}^{v_1^{\out(h)}}
\prod_{t=1}^{v_2^{\inc(h)}}
(\lambda_t^{'' \inc(h)}-\lambda_s^{'\out(h)}+ \bfm_h  t_1)
\prod_{s=1}^{v_1^{\inc(h)}}
\prod_{t=1}^{v_2^{\out(h)}}
(\lambda_t^{''\out(h)}-\lambda_s^{'\inc(h)}+\bfm_{h^*} t_2)
\Big), 
\end{align*}
 The denominator of $\fac_1$ and the averaging over all the shuffle $\Sh(v_1,v_2)$ come from pushing-forward from a Grassmannian bundle.  
 
In the above we see one more time that \eqref{eqn:shuffle_geom} is well-defined without introducing denominators. The operation \eqref{eqn:shuffle_geom0} is associative, and hence remains so after localization. On the other hand, the natural map from $H^{\BM}_{G_v\times\calD}(\bold{M}_{\widehat{Q}, v}, \bbQ)$ to its localization is injective. In particular, \eqref{eqn:shuffle_geom} is associative after localization, and hence is so before localization. This shows that $\calS\calH$ is an algebra.  The argument above then implies that $i_*$ is an algebra homomorphism.
This completes the proof of Theorem \ref{thm:shuffle}. 

\end{proof}

\begin{remark} We expect a version of $\bold{J}_{\widehat{Q}\setminus H, v}$ with framing can be defined, together with suitable stability conditions, so that the cohomology groups of the stable framed representations carry representations of the double of the COHA. 
\end{remark}

\section{Generalized Cartan matrix}
\label{sec:Cartan matrix}
In this section, we consider a symmetrizable Cartan matrix. We associate to 
a Cartan matrix with symmetrizer a quiver with symmetrizer. 

\subsection{The potential associated to the Cartan matrix}
Let $A = (a_{ij} )_{1\leq i,j \leq  n}$ be a symmetrizable generalized Cartan matrix.
Thus, $a_{ii} = 2$ for all $1\leq i \leq n$;  $a_{ij} \leq 0$ for any $1\leq i \neq j \leq n$; $a_{ij}\neq 0$ if and only if $a_{ji}\neq 0$, 
and there exists
a diagonal matrix $D=\diag(d_1, \cdots, d_n)$ with positive integer entries such that $DA$ is
symmetric. In other words, we have $d_ia_{ij}=d_ja_{ji}$ for any $1\leq i, j\leq n$. 
For simplicity, we drop the words "symmetrizable generalized" and  call the pair $(A,D)$ a {\it Cartan matrix with symmetrizer}. 

Start with a Cartan matrix with symmetrizer $(A, D)$. We construct a quiver with symmetrizer as follows. 
The set of vertices is $I=\{1, 2, \cdots, n\}$. For any $i<j\in I$, the number of arrows from $i$ to $j$ is
\[
|\gcd(a_{ij}, a_{ji})|. 
\]
We choose the symmetrizer of the quiver to be 
\[
l_{ij}:=|\frac{a_{ij}}{\gcd(a_{ij}, a_{ji})}|. 
\]
Thus, there exist  weights $
\bold{m}_i=d_i$ such that $\bold{m}_i l_{ij}=\bold{m}_j l_{ji} $, since $DA$ is symmetric.

\Omit{
For a positive integer $d$, we say that $(A,D)$ and $(Q,L)$ {\it correspond} to each other with {\it level of correspondence} $d$,  if for any $i\in I$ we have \[d_il_i=d\] and for any $s\neq t\in I$ we have 
\[\#\{s\to t\}+\#\{t\to s\}=\frac{-a_{st}}{l_s}=\frac{-a_{ts}}{l_t}.\]

In other words, let $C$ be the symmetric matrix 
with $c_{st}=d_sa_{st}$ for any $s\neq t\in I$. 
Let $(ad_{st})_{s,t\in I}$ be adjacency matrix of $Q$, and $\overline{(ad)}$ the transpose or equivalently the adjacency matrix of the opposite quiver $Q^{\op}$. Then,  $ad_{st}+ad_{ts}=\frac{-c_{st}}{d}$ if $s\neq t$, and $Q$ has no self-loops.}

\begin{example}
When the Cartan matrix is $A=\begin{pmatrix} 2 & -1\\ -2 & 2\end{pmatrix}$. We have $d_1=\bold{m}_1=2$ and $d_2=\bold{m}_2=1$. In this case, the symmetrizer of the quiver is $l_{12}=1$ and $l_{21}=2$. 
\end{example}
The following remark is kindly pointed out to us by Hiraku Nakajima.
\begin{remark}
When the Cartan matrix $A$ in the present paper is taken to be the transpose of the Cartan matrix in \cite{GLS}, 
the quiver just defined with potential $W^{L}=\sum_{h\in H} \big(B_{\inc(h)}^{l_{\inc(h), \out(h)}}hh^*-B_{\out(h)}^{l_{\out(h), \inc(h)}}h^* h\big)$ 
agrees with the quiver with potential in \cite[\S 1.7.3]{GLS}, with $l_{ij}$ in the present paper equal to $f_{ij}$ of {\it loc. cit.} for any $i,j\in I$. 
The vanishing of the cyclic derivatives \eqref{eq:derivative1}, \eqref{eq:derivative2}, \eqref{eq:derivative3} gives the relations (P2) and (P3) of \cite[Section 1.4]{GLS}.  
In particular, the Yangian constructed in \S~\ref{sec:Yangian} is the Yangian of the Langlands dual to the Lie algebra constructed in {\it loc cit.}.

Starting from the same quiver with potential in the present paper, the Lie algebra associated to it in  \cite{NW, HL}  is Langlands dual to that of \cite{GLS}, 
thus it is the same Lie algebra as in the present paper. 
\end{remark}
\Omit{
Note that if $Q$ has self-loop, then such a Cartan matrix with symmetrizer does not exist for any $d$. Conversely, for each $d$, the quiver with symmetrizer $(Q,L)$ might not be unique even if exists, although $Q$ and $L$ individually is unique if exists given the other. }

\subsection{A sign twist}
\label{subsec:twisting CoHA}
We now work under the setting of Example~\ref{ex:weightFunc}. By specializing $t_1=t_2=\hbar/2$, and $t_3=-\hbar$, 
we have a shuffle algebra $\calS\calH$ over $\bbC[\hbar]$.  
We define a sign-twisted shuffle algebra $\widetilde{\calS\calH}$. As a $\bbC[\hbar]$-module, $\widetilde{\calS\calH}$ is the same as $\calS\calH$. 
The weight function $\bold{m}$ is given as in Example~\ref{ex:weightFunc}. 
In particular, we have
$\bold{m}_i=d_i$, for $i=1, 2, \cdots, n$.

Define
\begin{align}
\widetilde{\fac}_1:&=\fac_1
=\prod_{i\in I}\prod_{s=1}^{v_1^i}
\prod_{t=1}^{v_2^i}\frac{\lambda\rq{}^i_s-\lambda\rq{}\rq{}^i_t-\bfm_i t_3}{\lambda\rq{}^i_s-\lambda\rq{}\rq{}^i_t} \label{fac1tw}\\
\widetilde{\fac}_2:&=(-1)^{\sum_{h\in H} v_1^{\inc(h)}v_2^{\out(h)}} \fac_2  \notag\\
&=\prod_{h\in H}\Big(
\prod_{s=1}^{v_1^{\out(h)}}
\prod_{t=1}^{v_2^{\inc(h)}}
(\lambda_t^{'' \inc(h)}-\lambda_s^{'\out(h)}+ \bfm_h  t_1)
\prod_{s=1}^{v_1^{\inc(h)}}
\prod_{t=1}^{v_2^{\out(h)}}
(\lambda_s^{'\inc(h)}-\lambda_t^{''\out(h)}-\bfm_{h^*} t_2)
\Big)\label{fac2tw}
\end{align}
For simplicity, we write the multiplication of $\widetilde{\calS\calH}$ as $\star$. 
The multiplication of $f_1(\lambda')\in \widetilde{\calS\calH}_{v_1}=\calS\calH_{v_1}$ and $f_2(\lambda'')\in \widetilde{\calS\calH}_{v_2}=\calS\calH_{v_2}$ is defined to be
\begin{equation}\label{shuffletw}
f_1(\lambda')\star f_2(\lambda'')=(-1)^{\sum_{h\in H} v_1^{\inc(h)}v_2^{\out(h)}}f_1(\lambda')* f_2(\lambda'')=
\sum_{\sigma\in\Sh(v_1,v_2)}\sigma(f_1\cdot f_2\cdot \widetilde{\fac}_1\cdot \widetilde{\fac}_2)\in \calS\calH_{v_1+v_2}. 
\end{equation}

\section{Shuffle presentation of a Yangian for symmetrizable Cartan matrix}
\label{sec:Yangian}
\subsection{The Yangian }
Let $\fg_{A,D}$ be the symmetrizable Kac-Moody Lie algebra associated to the Cartan matrix $A=(a_{ij})_{i, j\in I}$ with symmetrizer $D$. In this section we assume the collection of integers $(d_i)_{i\in I}$ to be relatively prime. This assumption is rather a conventional choice of normalization in the presentation of the Yangian we use \cite{GTL}.
Let $(Q,L)$ be the quiver with symmetrizer corresponding to $\fg_{A,D}$. Without raising confusions, we will also write $\fg_{A,D}$ as $\fg_Q$.

The Kac-Moody presentation of $\mathfrak{g}_Q$ is given by $\fg=\langle e_i, f_i \mid i\in I\rangle$ such that
\begin{align*}
&(\ad e_i )^{1-a_{ij}} e_j=0=(\ad f_i)^{1-a_{ij}} f_j, \,\  [e_i, f_j]=0, i\neq j,  \\
&[[e_i, f_i], e_j]=a_{ij} e_j,  [[e_i, f_i], f_j]=-a_{ij} f_j, i, j\in I. 
\end{align*}

Recall that the Yangian of $\fg_Q$, denoted by  $Y_\hbar(\fg_Q)$, is an associative algebra over $\bbQ[\hbar]$, generated by the variables
\[
x_{k, r}^{\pm}, h_{k, r}, (k\in I, r\in \N),
\]
subject to certain relations. A complete list of relations can be found in \cite{GTL}, which  we refrain from recalling in this paper. 
Relevant to us, define $Y_\hbar^+(\fg_Q)$ to be the quotient of the free algebra on the generators $x_{k, r}^{+}$, for $k\in I, r\in \N$ by the following relations. 
Define the generating series $x_k^+(u)\in Y_{\hbar}^+(\fg_Q)[\![u^{-1}]\!]$ by
$x_k^+(u)=\hbar \sum_{r\geq 0} x_{k, r}^+ u^{-r-1}. $
The following is a complete set of relations defining $Y_\hbar^+(\fg_Q)$:
\begin{align}
&(u-v-d_ka_{kl}\hbar/2) x_k^+(u)x_l^+(v)
=(u-v+d_ka_{kl}\hbar/2) x_l^+(v)x_k^+(u)
 \tag{Y1}\label{calY1} \\ &\phantom{ABCDEFGABCDEFG}+\hbar\Big(
[x_{k, 0}^+, x_{l}^+(v)]-[x_{k}^+(u), x_{l, 0}^+]
\Big), \text{for any $k, l\in I$}. \notag\\
&
\sum_{\sigma\in\fS_{1-a_{kl}}}[x_k^+(u_{\sigma(1)}),[x_k^+(u_{\sigma(2)}),[\cdots,[x_k^+(u_{\sigma (1-a_{kl})}),x_l^+(v)]\cdots]]]=0, \text{for $k \neq l\in I$}.
\tag{Y2}\label{calY2}
\end{align}
Denote by $Y_{\hbar}^{sub}(\fg_Q)$ the subalgebra of $Y_{\hbar}(\fg_Q)$ generated by $x_{k, r}^{+}$, for $k\in I, r\in \N$. It is clear that there is a surjective homomorphism $Y_{\hbar}^{+}(\fg_Q)\surj Y_{\hbar}^{sub}(\fg_Q)$, and they may differ in general. 
\Omit{
\begin{example}
Let $\fg=\mathfrak{sl}_4$ be of type $A_3$ with Cartan matrix 
$\begin{pmatrix}
2&-1 & 0\\
-1&2 & -1\\
0&-1 & 2
\end{pmatrix}$. 
The simple Lie algebra $\mathfrak{sp}_4$ is of type $C_2$ with Cartan matrix $\begin{pmatrix}
2&-1\\
-2&2
\end{pmatrix}$ ($d_1=2$ and $d_2=1$ in this case). 
It is well-known that the Dynkin diagram of $\mathfrak{sp}_4$ can be obtained by folding the 
Dynkin diagram of $\mathfrak{sl}_4$. For Lie algebras, the folding is the Lie algebra homomorphism
\[
\mathfrak{sp}_4 \to \mathfrak{sl}_4, e_{2}' \mapsto e_{1}+e_{3}, e_{1}' \mapsto e_{2}. 
\]
Denote by $Y(A_3)$ the Yangian associated to $\mathfrak{sl}_4$. We write the generators of $Y^+(A_3)$ as $\{ x_{1, r}, x_{2, r}, x_{3, r}\mid r\in \mathbb{N}\}$. Similarly, denote by $Y(C_2)$ the Yangian associated to  $\mathfrak{sp}_4$. The generators of $Y^+(C_2)$ is denoted by $\{ x'_{1, r}, x'_{2, r}\mid r\in \mathbb{N}\}$. 
Unlike in the case of the Lie algebra, the obvious assignments $x_{2, r}' \mapsto x_{1, r}+x_{3, r}, x_{1, r}' \mapsto x_{2, r}$ does not extend to an algebra homomorphism from the Yangian $Y^+(C_2)$ to $Y^+(A_3)$. The quadratic relation 
\[
[x_{2, r+1}', x_{2, s}']-[x_{2, r}', x_{2, s+1}']=-\hbar (x_{2, r}'x_{2, s}'+x_{2, s}'x_{2, r}') 
\] is not preserved. Algebraically, it is not clear how to get the non-simply laced Yangian via folding. 
\end{example}
}
\subsection{Main result}
In this section, we compare the Yangian of non-simply laced type with the COHA $\calH_{\calD}(\widehat Q, W^L)$ associated to quiver with symmetrizer. 

Let $\widetilde{\calS\calH}^{\sph} $ be the spherical subalgebra of $\widetilde{\calS\calH}$ generated by $ \calS\calH_{e_k}$ as $k$ varies in $I$. 
\begin{theorem}\label{thm:Yangian to sh}
The assignment
\[Y^+_\hbar(\fg_Q)\ni  x_{k, r}^+ \mapsto (\lambda^{(k)})^{r}\in \calS\calH_{e_k}=\bbC[\hbar][\lambda^{(k)}]\]
 extends to a well-defined algebra epimorphism $Y^+_{\hbar}(\fg_Q)\to\widetilde{\calS\calH}^{\sph}|_{t_1=t_2=\hbar/2, t_3=-\hbar}$. 
\end{theorem}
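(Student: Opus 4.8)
The plan is to exploit the presentation of $Y_\hbar^+(\fg_Q)$ by the generators $x_{k,r}^+$ and the relations \eqref{calY1}, \eqref{calY2}. The assignment $x_{k,r}^+\mapsto(\lambda^{(k)})^r$ visibly defines an algebra map out of the free algebra on the $x_{k,r}^+$, and $\widetilde{\calS\calH}$ is associative (its product \eqref{shuffletw} differs from the associative product of Theorem~\ref{thm:shuffle} only by a sign), so the whole content is to check that the images obey \eqref{calY1} and \eqref{calY2} after the specialization $t_1=t_2=\hbar/2$, $t_3=-\hbar$. I would package the generators into the series $x_k^+(u)$, whose image is the delta-function $\tfrac{\hbar}{u-\lambda^{(k)}}\in\calS\calH_{e_k}\brac{u^{-1}}$, and verify every identity as an equality of rational functions in the spectral parameters $u,v,\dots$ with coefficients in the relevant graded piece of $\widetilde{\calS\calH}$.

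First I would verify \eqref{calY1}, separating $k\neq l$ from $k=l$. For $k\neq l$ the shuffle set $\Sh(e_k,e_l)$ is a single element, so $x_k^+(u)\star x_l^+(v)$ and $x_l^+(v)\star x_k^+(u)$ are the two delta-functions multiplied by the two specializations of $\widetilde{\fac}_2$ from \eqref{fac2tw} (while $\widetilde{\fac}_1=1$). The essential input is the explicit weight function of Example~\ref{ex:weightFunc}: setting $A=\prod_p(\mu-\lambda+\bfm_{h_p}\hbar/2)$ and $B=\prod_p(\mu-\lambda-\bfm_{h_p^*}\hbar/2)$ for the two orders, the arithmetic progressions $\bfm_{h_p}=(n+2-2p)d$ and $-\bfm_{h_p^*}=(n-2p)d$ share every factor but one, so $A=(\mu-\lambda-d_ka_{kl}\hbar/2)\,C$ and $B=(\mu-\lambda+d_ka_{kl}\hbar/2)\,C$ for a common $C$, using $nd=-d_ka_{kl}$. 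After clearing the denominator $(u-\lambda)(v-\mu)$, relation \eqref{calY1} collapses to the elementary identity $(U-a)(Y-a)-(U+a)(Y+a)=-2a(U+Y)$ with $a=d_ka_{kl}\hbar/2$, $U=u-v$, $Y=\mu-\lambda$, and the right-hand side $-2a(U+Y)$ is reproduced exactly by the images of the correction brackets $\hbar\big([x_{k,0}^+,x_l^+(v)]-[x_k^+(u),x_{l,0}^+]\big)$. For $k=l$ one has $\widetilde{\fac}_2=1$ and $\widetilde{\fac}_1=\tfrac{\lambda_1-\lambda_2+d_k\hbar}{\lambda_1-\lambda_2}$, and \eqref{calY1} (with $a_{kk}=2$) reduces, after summing over $\Sh(1,1)$, to the standard single-vertex symmetrization identity.

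The hard part will be the quantum Serre relation \eqref{calY2}. Here I would compute the image of the nested-commutator sum as a symmetric rational function in the $m:=1-a_{kl}$ variables $\lambda_1,\dots,\lambda_m$ at vertex $k$ and the single variable $\mu$ at vertex $l$, using the already-verified \eqref{calY1} to rewrite each nested bracket as a product of the structure functions found above times delta-functions. The task is then to show that the resulting sum over $\fS_m$ vanishes identically. I expect this to be a wheel-condition argument: the denominators $\prod_i(u_i-\lambda_i)$, the numerator factors $\prod_{i<j}(\lambda_i-\lambda_j+d_k\hbar)$ coming from $\widetilde{\fac}_1$ in \eqref{fac1tw}, and the $n$ factors per pair $(\lambda_i,\mu)$ coming from $\widetilde{\fac}_2$ combine so that the symmetrization acquires enough zeros to vanish, once one checks its degree in $\mu$ against the Serre exponent $m=1+nl_{kl}=1-a_{kl}$. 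The delicate point is this degree bookkeeping together with the telescoping of the weights $\bfm_{h_p}$; I would carry it out following the closely parallel computations of \cite[\S~7]{YZ1} and \cite{YZ2}, the latter reducing the vanishing to the combinatorial identity in \cite[Appendix~A]{D}. Once \eqref{calY1} and \eqref{calY2} hold on the images, the universal property of the presentation produces the desired homomorphism $Y_\hbar^+(\fg_Q)\to\widetilde{\calS\calH}|_{t_1=t_2=\hbar/2,\,t_3=-\hbar}$.
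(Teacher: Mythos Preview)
Your handling of \eqref{calY1} is essentially the paper's argument: for $k\neq l$ you exploit exactly the same cancellation (the paper phrases it as $S'=-S'$ for the set of weights, you phrase it as $A$ and $B$ sharing all but one linear factor), and for $k=l$ both reduce to the same symmetrized identity. No issue there.

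The Serre relation \eqref{calY2} is where your proposal has a gap. The paper does not work with the full generating-series nested commutator and does not invoke any wheel-condition or degree-in-$\mu$ argument. Instead it first reduces \eqref{calY2} to the degree-zero version
\[
\sum_{p=0}^{1-a_{kl}}(-1)^p\binom{1-a_{kl}}{p}\,x_{k,0}^{\star p}\star x_{l,0}\star x_{k,0}^{\star(1-a_{kl}-p)}=0
\]
(citing \cite[\S~10.4]{Nak99} for this reduction), then computes each term explicitly via \eqref{shuffletw}, factors out the common $\prod_{m\in S'}\prod_i(\lambda^{(l)}-\lambda_i^{(k)}-m\hbar/2)$ using $S'=-S'$, and lands on a concrete polynomial identity which is exactly \cite[Corollary~A.2]{YZ1}. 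Your sketch skips the reduction to the $r=0$ relation, and your proposed endpoint---``the combinatorial identity in \cite[Appendix~A]{D}''---is the wrong reference: Davison's Appendix~A is the dimensional-reduction input used for Theorem~\ref{thm:shuffle}, not a Serre-type identity. The vanishing you need is \cite[Corollary~A.2]{YZ1}. Without the reduction to $(\mathrm{Y2}')$ and without that identity, the ``degree bookkeeping plus telescoping'' you describe is not a proof; in particular, a naive degree count in $\mu$ does not by itself force the symmetrized sum to vanish, since after stripping the common factor the remaining expression still has positive degree in each $\lambda_i^{(k)}$ and one genuinely needs the explicit combinatorial identity.
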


\begin{remark}
The proof of Theorem \ref{thm:Yangian to sh} is similar as that of \cite[Theorem~7.1]{YZ1}, taking into account the shuffle formulas in the present setting. The main difference here, compared to the proof in \cite{YZ1} are the values of $n,d,a$ from  Example \ref{ex:weightFunc}, the definitions of $S$ and $S'$.
Note also that the Serre relation \eqref{calY2} is asymmetric with respect to $k$ and $l$. See 
Remark~\ref{rmk:Serre} for a detailed discussion. We only include the reduction of the Serre relation to a form to which \cite[Corollary~A2]{YZ1} can be directly applied.

\end{remark}

In order to prove Theorem~\ref{thm:Yangian to sh}, we need to verify  the  relations \eqref{calY1} and \eqref{calY2}  in the algebra $\widetilde{\calS\calH}$. We include here the detailed computation parallel to the proof of \cite[Theorem~7.1]{YZ1} to keep track of $D=\diag(d_1, \cdots, d_n)$ from the Yangian side and the symmetrizer $L$ from the COHA side. 

\subsubsection{The quadratic relation \eqref{calY1}}
We now check the relation \eqref{calY1} in the shuffle algebra.
We have
\[
x_k^+(u)\mapsto \hbar \sum_{r\geq 0} (\lambda^{(k)})^r u^{-r-1}=\frac{\hbar}{u-\lambda^{(k)}}, 
\]
where the equality means the expansion of the rational function $\frac{\hbar}{u-\lambda^{(k)}}$ at $u=\infty$. 

To check the  quadratic relation \eqref{calY1}, we need to show
\begin{align}
(u-v-\frac{\hbar d_k a_{kl}}{2}) 
&\frac{\hbar}{u-\lambda^{(k)}}
\star\frac{\hbar}{v-\lambda^{(l)}}
-(u-v+\frac{\hbar d_k a_{kl}}{2}) 
\frac{\hbar}{v-\lambda^{(l)}}\star
\frac{\hbar}{u-\lambda^{(k)}} \label{equ:quad}\\
=&\hbar
\Big(1^{(k)}\star\frac{\hbar}{v-\lambda^{(l)}}
-\frac{\hbar}{v-\lambda^{(l)}}\star 1^{(k)}
-\frac{\hbar}{u-\lambda^{(k)}}\star 1^{(l)}
+ 1^{(l)}\star\frac{\hbar}{u-\lambda^{(k)}}
\Big).\notag
\end{align}

We first consider the case when $k\neq l$. We spell out the formula of the multiplication
$\widetilde{\calS\calH}_{e_k} \otimes \widetilde{\calS\calH}_{e_l}\to \widetilde{\calS\calH}_{e_k+e_l}$ as a map 
$\bbC[\hbar][\lambda^{(k)}]\otimes  \bbC[\hbar][\lambda^{(l)}] \to \bbC[\hbar][\lambda^{(k)}, \lambda^{(l)}]$. 
Plugging-in $v_1=e_k$, and $v_2=e_l$ to \eqref{fac1tw},  we have $\widetilde{\fac}_1=1$. 
If there is no arrow between $k$ and $l$, then both sides of \eqref{equ:quad} are zero. 
Without loss of generality, we assume there are $n$ arrows from $k$ to $l$. 
As in Example \ref{ex:weightFunc}, we have
\begin{align*}
& n=|\gcd(a_{kl}, a_{lk})|, \,\ d=\bold{m}_k l_{kl}=|\frac{d_k a_{kl}}{\gcd(a_{kl}, a_{lk})}|, 
\text{thus } \,\  a:=nd=-d_k a_{kl}=-d_l a_{lk}, \\
&\text{and the weights $\bold{m}$ are}: 
\bfm_{h_p}:=a+2d-2pd \,\ \bfm_{h_p^*}:=-a  + 2pd, \,\ \text{for}\,\  p=1, \cdots, n. 
\end{align*}
 Let $S$ be the set of integers $\{a, a-2d, a-4d, \dots, -a+4, -a+2d\}$. 
 Then, $S=\{ \bfm_{h_p}\mid 1\leq p\leq n\}=\{ \bfm_{h_p^*}\mid 1\leq p\leq n\}$. 
Set $S':=\{a-2d, a-4d, \dots, -a+2d\}$, we then have 
\begin{equation}\label{S'S}
S'\sqcup \{a\}=S, \text{and} \,\ S'=-S'. 
\end{equation}
 Plugging the weight function $\bold{m}$ into  \eqref{fac2tw}, we have $\widetilde{\fac}_2=\prod_{m\in S}(\lambda^{(l)}-\lambda^{(k)}+m\frac{\hbar}{2})$. Therefore, by the shuffle formula \eqref{shuffletw}, the Hall multiplication is given by
\[
(\lambda^{(k)})^p \star(\lambda^{(l)})^q=
(\lambda^{(k)})^p (\lambda^{(l)})^q \prod_{m\in S}(\lambda^{(l)}-\lambda^{(k)}+m\frac{\hbar}{2}).
\] 
Similarly, the multiplication
$\widetilde{\calS\calH}_{e_l} \otimes \widetilde{\calS\calH}_{e_k}\to \widetilde{\calS\calH}_{e_k+e_l}$
is given by
\begin{align*}
(\lambda^{(l)})^p \star(\lambda^{(k)})^q
=(\lambda^{(l)})^p (\lambda^{(k)})^q \prod_{m\in S}(\lambda^{(l)}-\lambda^{(k)}-m\frac{\hbar}{2}).
\end{align*}
Plugging  into equation \eqref{equ:quad}, \eqref{calY1} becomes the following identity
\begin{align}
(u-v-\frac{\hbar d_k a_{kl}}{2}) &
\frac{\hbar}{u-\lambda^{(k)}}
\frac{\hbar}{v-\lambda^{(l)}}
\prod_{m\in S}(\lambda^{(l)}-\lambda^{(k)}+m\frac{\hbar}{2}) \notag\\
-(u-v+\frac{\hbar d_k a_{kl}}{2}) &
\frac{\hbar}{v-\lambda^{(l)}}
\frac{\hbar}{u-\lambda^{(k)}}\prod_{m\in S}(\lambda^{(l)}-\lambda^{(k)}-m\frac{\hbar}{2})  \notag\\
=&\hbar
\Big(\frac{\hbar}{v-\lambda^{(l)}}
\prod_{m\in S}(\lambda^{(l)}-\lambda^{(k)}+m\frac{\hbar}{2})
-\frac{\hbar}{v-\lambda^{(l)}}
\prod_{m\in S}(\lambda^{(l)}-\lambda^{(k)}-m\frac{\hbar}{2}) \label{equ*}\\
&-\frac{\hbar}{u-\lambda^{(k)}}\prod_{m\in S}(\lambda^{(l)}-\lambda^{(k)}+m\frac{\hbar}{2})
+\frac{\hbar}{u-\lambda^{(k)}}
\prod_{m\in S}(\lambda^{(l)}-\lambda^{(k)}-m\frac{\hbar}{2})
\Big).  \notag
\end{align}

Using \eqref{S'S}, we have the common factor 
\[
\hbar^2\prod_{m\in S'}(\lambda^{(l)}-\lambda^{(k)}+m\frac{\hbar}{2})=\hbar^2\prod_{m\in S'}(\lambda^{(l)}-\lambda^{(k)}-m\frac{\hbar}{2}). 
\] 
Canceling the above common factor, the equality \eqref{equ*} becomes
\begin{align*}
&(u-v+\frac{\hbar a}{2}) 
\frac{\lambda^{(l)}-\lambda^{(k)}+a\frac{\hbar}{2}}{(u-\lambda^{(k)})(v-\lambda^{(l)})}-(u-v-\frac{\hbar a}{2}) 
\frac{\lambda^{(l)}-\lambda^{(k)}-a\frac{\hbar}{2}}{(v-\lambda^{(l)})(u-\lambda^{(k)})}
=
a\hbar\Big( \frac{1}{v-\lambda^{(l)}}
-\frac{1}{u-\lambda^{(k)}}
\Big). 
\end{align*}
Both sides of the above identity are equal to $a\hbar\frac{u-v+\lambda^{(l)}-\lambda^{(k)}}{(u-\lambda^{(k)})(v-\lambda^{(l)})}$. 
This shows  the relation \eqref{calY1} for the case when $k\neq l$. 

We now check the relation \eqref{calY1} when $k=l$.
A similar calculation using the shuffle formula \eqref{shuffletw} shows that equation \eqref{equ:quad} becomes the following identity in $\widetilde{\calS\calH}_{2e_k}
=\bbC[\hbar][\lambda_1, \lambda_2]$
\begin{align*}
&(u-v-d_k\hbar) \sum_{\sigma\in\fS_2}\sigma \Big(
\frac{\hbar}{u-\lambda_1}
\frac{\hbar}{v-\lambda_2}
\frac{\lambda_1-\lambda_2+ d_k\hbar}{\lambda_1-\lambda_2}\Big)
-(u-v+d_k\hbar) \sum_{\sigma\in\fS_2}\sigma
\Big(\frac{\hbar}{v-\lambda_1}
\frac{\hbar}{u-\lambda_2}\frac{\lambda_1-\lambda_2+ d_k\hbar}{\lambda_1-\lambda_2}\Big)\\
&=\hbar\sum_{\sigma\in\fS_2}\sigma
\Bigg(\Big(\frac{\hbar}{v-\lambda_2}
-\frac{\hbar}{v-\lambda_1}
-\frac{\hbar}{u-\lambda_1}
+\frac{\hbar}{u-\lambda_2}\Big)
\frac{\lambda_1-\lambda_2+ d_k\hbar}{\lambda_1-\lambda_2}
\Bigg).
\end{align*} 
It is straightforward to show that both sides of the above identity can be simplified to 
\[\frac{2d_k\hbar^3}{\lambda_1-\lambda_2} \Big(\frac{1}{v-\lambda_2}
-\frac{1}{v-\lambda_1}
-\frac{1}{u-\lambda_1}
+\frac{1}{u-\lambda_2}\Big).\]
This completes the proof of relation \eqref{calY1} for $k=l$.

\subsubsection{The Serre relation \eqref{calY2}}
By an argument similar to \cite[\S~10.4]{Nak99},  to show \eqref{calY2}, without loss of generality it suffices to check
\[\sum_{p=0}^{1-a_{kl}} (-1)^p
{1-a_{kl} \choose p} x_{k, 0}^{\star p} 
\star x_{l, 0} \star x_{k, 0}^{\star(1-a_{kl}-p)}=0, 
\tag{Y2$'$}\label{calY2'}\]
where $x^{\star n}=x\star x\star \cdots \star x$, the shuffle product of $n$-copies of $x$.
We use the shuffle formula \eqref{shuffletw} to check the Serre relation \eqref{calY2'}.

For any $i,j$, let $\lambda_{i,j}=\lambda_i-\lambda_j$. By the shuffle formula \eqref{shuffletw}, we have the recurrence relation
\begin{align*}
&(x_{k, 0})^{\star n+1}
=\sum_{\sigma \in\Sh_{(n,1)}} \sigma\Big( (x_{k, 0})^{\star n}  \prod_{i=1}^n \frac{\lambda^{(k)}_{i, n+1}+d_k\hbar}{\lambda^{(k)}_{i, n+1}}\Big).
\end{align*}
Therefore, inductively, we get a formula of $(x_{k, 0})^{\star n}$:
\begin{align}
(x_{k, 0})^{\star n}
=&\sum_{\sigma\in \fS_n}
\sigma\left(
 \frac{\lambda^{(k)}_{12}+d_k\hbar}{\lambda^{(k)}_{12}}\cdot
 \frac{\lambda^{(k)}_{13}+d_k\hbar}{\lambda^{(k)}_{13}}
 \cdots
 \frac{\lambda^{(k)}_{n-1, n}+d_k\hbar}{\lambda^{(k)}_{n-1, n}}\right). \label{1*}
\end{align}

Note that $k\neq l$. By the shuffle formula \eqref{shuffletw}, the multiplication
$\widetilde{\calS\calH}_{n e_k} \otimes \widetilde{\calS\calH}_{e_l}\to\widetilde{\calS\calH}_{n e_k+e_l}
$ is given by
\begin{align}
&(x_{k, 0})^{\star n}\star x_{l, 0}
=(x_{k, 0})^{\star n} \prod_{i=1}^n \prod_{m\in S}
(\lambda^{(l)}-\lambda^{(k)}_i+m\frac{\hbar}{2}), \label{2*}
\end{align} where $S=\{a, a-2d, a-4d, \dots, -a+4d, -a+2d\}$. Here again we write $a=-d_ka_{kl}$.

For the multiplication $\widetilde{\calS\calH}_{p e_k+ e_l} \otimes \widetilde{\calS\calH}_{q e_k}\to \widetilde{\calS\calH}_{e_l+(p+q) e_k}$, considered as a map \[
\bbC[\hbar ][\lambda^{(k)}_1, \cdots,\lambda^{(k)}_p, \lambda^{(l)}_{1} ] \otimes \bbC[\hbar ][\lambda^{(k)}_{p+1}, \cdots,\lambda^{(k)}_{p+q}]  \to 
\bbC[\hbar][\lambda^{(k)}_1, \cdots,\lambda^{(k)}_{p+q}, \lambda^{(l)}_{1}],\] we have
\begin{align}
((x_{k, 0})^{\star p}\star x_{l, 0})\star(x_{k, 0})&^{\star q}
=\sum_{\sigma \in \Sh_{(p, q)}}\sigma \Big((x_{k, 0})^{\star p}\star x_{l, 0}\cdot  (x_{k, 0})^{\star q} \cdot \notag \\
&\cdot  \prod_{s=1}^p \prod_{t=p+1}^{p+q}
\frac{\lambda^{(k)}_s-\lambda^{(k)}_t+d_k\hbar }{\lambda^{(k)}_s-\lambda^{(k)}_t} 
\cdot \prod_{t=p+1}^{p+q} \prod_{m\in S}
(\lambda^{(l)}-\lambda^{(k)}_t -m\frac{\hbar}{2})\Big).
 \label{3*}
\end{align}
Plugging the formulas of \eqref{1*} \eqref{2*} into  \eqref{3*} with $q=1-a_{kl}-p$, we get
\begin{align*}
x_{k, 0}^{\star p} 
\star x_{l, 0} \star x_{k, 0}^{\star(1-a_{kl}-p)}&
=\sum_{\pi\in \Sh_{(p, 1-a_{kl}-p)}}\pi \Bigg( 
\Big( \sum_{\sigma\in \fS_p} \sigma\cdot \prod_{1\leq i<j \leq p}
\frac{\lambda^{(k)}_{i, j}+d_k\hbar}{\lambda^{(k)}_{i, j}}\Big)\\
&\cdot \Big( \sum_{\sigma\in \fS_{1-a_{kl}-p}} \sigma \cdot \prod_{\{p+1\leq i<j \leq 1-a_{kl}\}}
\frac{\lambda^{(k)}_{i, j}+d_k\hbar}{\lambda^{(k)}_{i, j}}\Big)
\Big(\prod_{s=1}^p \prod_{t=p+1}^{1-a_{kl}}
\frac{\lambda^{(k)}_{s, t}+d_k\hbar }{\lambda^{(k)}_{s, t}}\Big)\\
&\cdot 
\prod_{m\in S}\Big( \prod_{i=1}^p(\lambda^{(l)}-\lambda^{(k)}_i+m\frac{\hbar}{2}) 
\prod_{t=p+1}^{1-a_{kl}} 
(\lambda^{(l)}-\lambda^{(k)}_t -m\frac{\hbar}{2})\Big)\Bigg).
\end{align*}
Re-arranging the above summation, we have:
\begin{align*}
\sum_{p=0}^{1-a_{kl}} (-1)^p 
{1-a_{kl} \choose p} 
x_{k, 0}^{\star p} 
\star x_{l, 0} \star & x_{k, 0}^{\star(1-a_{kl}-p)}
=\sum_{p=0}^{1-a_{kl}} (-1)^p
{1-a_{kl} \choose p} \sum_{\sigma\in \fS_{1-a_{kl}}}
\Big( \prod_{1\leq i<j \leq 1-a_{kl}}
\frac{\lambda^{(k)}_{\sigma(i), \sigma(j)}+d_k\hbar}{\lambda^{(k)}_{\sigma(i), \sigma(j)}}\cdot 
\\
&
\cdot \prod_{m\in S}\Big( \prod_{i=1}^p(\lambda^{(l)}-\lambda^{(k)}_{\sigma(i)}+m\frac{\hbar}{2}) 
\prod_{t=p+1}^{1-a_{kl}} 
(\lambda^{(l)}-\lambda^{(k)}_{\sigma(t)} -m\frac{\hbar}{2})\Big)\Big).
\end{align*}

Note that the factor
\[
\prod_{m\in S'}\Big( \prod_{i=1}^p(\lambda^{(l)}-\lambda^{(k)}_{\sigma(i)}+m\frac{\hbar}{2}) 
\prod_{t=p+1}^{1-a_{kl}} 
(\lambda^{(l)}-\lambda^{(k)}_{\sigma(t)} -m\frac{\hbar}{2})\Big)
=\prod_{m\in S'}\prod_{i=1}^{1-a_{kl}}
(\lambda^{(l)}-\lambda^{(k)}_{i}-m\frac{\hbar}{2}),
\]  is independent of $\sigma\in \mathfrak{S}_{1-a_{kl}}$, hence a common factor.  Here again $S'=\{a-2d, a-4d, \dots, -a+2d\}$.
Let $\lambda'^{(k)}_i=\lambda^{(k)}_i-\lambda^{(l)}$. 
After canceling the above common factor, to show the Serre relation \eqref{calY2'}, it suffices to show
\begin{align}
&\sum_{p=0}^{1-a_{kl}} (-1)^p
{1-a_{kl} \choose p} \sum_{\sigma\in \fS_{1-a_{kl}}}
 \sigma\left( 
 \prod_{s=1}^p(\lambda'^{(k)}_{s}-\frac{a\hbar}{2}) 
\prod_{t=p+1}^{1-a_{kl}} 
(\lambda'^{(k)}_{t}+\frac{a\hbar}{2})
\prod_{1\leq i<j \leq 1-a_{kl}}
\frac{\lambda^{(k)}_{i, j}+
d_k\hbar}{\lambda^{(k)}_{i, j}} \right)=0.\label{simply:serre}
\end{align}
The identity \eqref{simply:serre} is \cite[Corollary~A2]{YZ1}. This proves \eqref{calY2}.

\begin{remark}\label{rmk:Serre}
Note that for non-simply laced Cartan matrix $A$, when $a_{kl} \neq a_{lk}$, the Serre relation \eqref{calY2} is asymmetric switching $k$ and $l$. 
This is reflected in the fact that in \eqref{simply:serre}, the last factor 
$\prod_{1\leq i<j \leq 1-a_{kl}}\frac{\lambda^{(k)}_{i, j}+
d_k\hbar}{\lambda^{(k)}_{i, j}} $ is different switching $k$ and $l$. In addition, although the number $a$ is the same, the decompositions $a=-a_{kl}d_k$ and $a=-a_{lk}d_l$ used in the proof above are different  switching $k$ and $l$.
\end{remark}

As a consequence of Theorem \ref{thm:shuffle} and Theorem \ref{thm:Yangian to sh}, we obtain an algebra epimorphism
\[
\Phi: Y^+_\hbar(\fg_{A, D}) \surj (\calH_{\calD}(\widehat Q, W^L)/\text{tors})^{\sph} \cong \widetilde{\calS\calH}^{\sph}, 
\]
where $\text{tors}$ means the torsion elements of $\calH_{\calD}(\widehat Q, W^L)$ which are defined to be elements in the kernel of the morphism $\calH_{\calD}(\widehat Q, W^L)\to \calS\calH$. 
Assume the quiver $Q$ is of finite type and simply laced,  it follows from the work of Schiffmann and Vasserot \cite[Theorem A (c)]{SV17a} that $\calH_{\calD}(\widehat Q, W^L)_v$ is torsion free over $H^{BM}_{\calD\times \GL_v}(\pt)$. The Yangian action on the cohomology of quiver varieties is faithful \cite[Lemma 8.1]{YZ1}.  The argument in the proof of \cite[Theorem 8.3]{YZ1} shows that the morphism $\Phi$ is an isomorphism.
For general quiver $Q$, after tensoring the fractional field $K$ of $H^{BM}_{\calD}(\pt)$, we have $\calH_{\calD}(\widehat Q, W^L)_v\otimes_{H^{BM}_{\calD}(\pt)} K$ is torsion free over $H^{BM}_{\calD\times \GL_v}(\pt)\otimes_{H^{BM}_{\calD}(\pt)} K$ \cite[Theorem B (d)]{SV17a}. The comparison of the Yangian of Maulik and Okounkov and COHA can be found in \cite{SV17b}. 
 
We do not know if either statement holds in general.  To investigate these questions, 
we need to consider a faithful representation of the Yangian, which is expected to be obtained from the cohomology of Nakajima quiver varieties in the present setting, generalizing the construction of \cite{Var}. In the present paper, we do not consider frames or stablity conditions of representations of the quiver with symmetrizer, hence do not address these questions. 

\appendix
\section{Review of dimensional reduction}
\label{sec:ReviewDimRed}
We review a dimensional reduction procedure that describes the cohomological Hall algebra in the presence of a cut. 
Such a dimensional reduction on the level of cohomology groups is obtained by Davison \cite[Appendix~A]{D}, which is the main ingredient here. 

In this section we study the behaviour of the algebra structure under the dimensional reduction, which is needed in the present paper. 
The argument here  is similar to that in the proof of 
 \cite[Theorem~2.5]{YZ2}. However, the  statement here is more general and more useful (see e.g., \cite[Section 7.1]{RSYZ2}). 
 We include a brief sketch of the argument in present generality, highlighting the difference to that in  \cite[Theorem~2.5]{YZ2}.

Let $\Gamma=(\Gamma_0, \Gamma_1)$ be a quiver, and $W$ be the potential. 
In general, 
a cut $C$ of $(\Gamma, W)$ is a subset $C\subset \Gamma_1$ such that $W$ is homogeneous of degree 1 with respect to the grading defined on arrows by

\begin{displaymath} 
\deg a= 
\left\{
     \begin{array}{lr}
       1 & : a \in C, \\
       0 & : a \notin C.
     \end{array}
   \right.
   \end{displaymath}
Consider the quotient of the path algebra $\C (\Gamma \backslash C)$ by the relations 
$\{{\partial W}/{\partial a} \mid a\in C \}$. The representation variety of this quotient algebra is denoted by
\[
\bold{J}_{\Gamma\backslash C, v}:=\{ x\in \bold{M}_{\Gamma \backslash C, v} \mid {\partial W}/{\partial a}(x)=0,  \forall a\in C \}.\] 
We view $C$ as forming the edges of a new quiver, which we still denote by $C$ for simplicity. Let $ \bold{M}_{C, v}$ be the representation variety of $C$ with dimension vector $v$. Consider the trivial vector bundle
 $\pi:  \bold{M}_{\Gamma, v}= \bold{M}_{\Gamma \backslash C, v}\times  \bold{M}_{C, v}  \to  \bold{M}_{\Gamma \backslash C, v}$ carrying a scaling $\Gm$ action of weight one on the fiber $\bold{M}_{C, v} $.
Let $\tr W_v:  \bold{M}_{\Gamma, v}= \bold{M}_{\Gamma \backslash C, v}\times \bold{M}_{C, v} \to \mathbb{A}^1$ be the function which is $\Gm$--equivariant. 
Define $Z \subset  \bold{M}_{\Gamma \backslash C, v}$ to be the reduced scheme consisting of points $z\in  \bold{M}_{\Gamma \backslash C, v}$, such that $\pi^{-1}(z)\subset (\tr W_v)^{-1}(0)$. 
Then, we have 
$Z=\{x\in  \bold{M}_{\Gamma \backslash C, v}\mid \tr(W_v) (x, l)=0, \forall l\in \bold{M}_{C, v} \}$. 
To summarize the notations, we have the diagram:
\[
\xymatrix@R=1.5em{
Z \times  \bold{M}_{C, v} \ar@{^{(}->}[r]^{i\times \id}\ar[d]^{\pi_Z} &\bold{M}_{\Gamma \backslash C, v}\times  \bold{M}_{\Gamma \backslash C, v} \ar[d]^{\pi} \ar[dr]&\\
Z \ar@{^{(}->}[r]^{i} &\bold{M}_{\Gamma \backslash C, v}  \ar[r]^{p}& \pt.
}\]

\begin{lemma} \label{lem:Z_J}
The subvariety $Z$ of $\bold{M}_{\Gamma \backslash C, v}$ is naturally identified with $\bold{J}_{\Gamma\backslash C, v}$. 
\end{lemma}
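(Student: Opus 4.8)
The plan is to exploit the defining property of a cut---that $W$ is homogeneous of degree one in the arrows of $C$---in order to show that $\tr(W_v)$ is \emph{linear} in the fibre coordinate $l\in\bold{M}_{C,v}$, and then to recognize the coefficients of this linear function as exactly the cyclic derivatives $\partial W/\partial a$, $a\in C$. Set-theoretically this will immediately match the vanishing locus $Z$ with $\bold{J}_{\Gamma\backslash C,v}$.

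First I would make the linearity explicit. Writing $W=\sum_u c_u u$ as a sum of cycles, the cut condition forces every cycle $u$ occurring in $W$ to contain exactly one arrow of $C$. Fix $x\in\bold{M}_{\Gamma\backslash C,v}$ and write $l_a$ for the matrix assigned to $a\in C$. For a cycle $u$ whose unique $C$-arrow is $a$, cyclicity of the trace gives
\[
\tr(u)(x,l)=\tr\bigl(l_a\cdot M_u(x)\bigr),
\]
where $M_u(x)$ is the evaluation at $x$ of the path obtained from $u$ by deleting $a$; this is a path in $\Gamma\backslash C$, hence evaluable at $x$. Summing over all cycles and grouping them by the $C$-arrow they contain yields the key identity
\[
\tr(W_v)(x,l)=\sum_{a\in C}\tr\Bigl(l_a\cdot\tfrac{\partial W}{\partial a}(x)\Bigr),
\]
because $\sum_{u:\,a\in u}c_u\,M_u(x)$ is precisely the evaluation at $x$ of the cyclic derivative $\partial W/\partial a$ (the single occurrence of $a$ in each relevant cycle ensures there are no further terms, and that $\partial W/\partial a$ involves only arrows of $\Gamma\backslash C$).

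Given this formula, the identification is formal. By definition $x\in Z$ iff $\tr(W_v)(x,l)=0$ for every $l\in\bold{M}_{C,v}$. Since the matrices $l_a$ range independently over the full Hom-spaces and the trace pairing $(A,B)\mapsto\tr(AB)$ is non-degenerate, the right-hand side above vanishes identically in $l$ iff $\partial W/\partial a(x)=0$ for each $a\in C$. This is exactly the system of equations defining $\bold{J}_{\Gamma\backslash C,v}$, so $Z$ and $\bold{J}_{\Gamma\backslash C,v}$ have the same closed points; as $Z$ is reduced by construction, this yields the claimed identification of varieties.

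I expect the one genuinely delicate point to be the bookkeeping in the middle step: verifying that regrouping the cycles of $W$ by their unique $C$-arrow and applying cyclicity of the trace reproduces the cyclic derivative $\partial W/\partial a$ on the nose, with the composition order and source/target data matching up so that $\tr(l_a\cdot\partial W/\partial a(x))$ is well-defined. The homogeneity hypothesis is what makes all of this work, guaranteeing a single $C$-arrow per cycle so that no terms of higher degree in $l$ appear and each $M_u(x)$ is a genuine representation of a path in $\Gamma\backslash C$; this is precisely where the cut condition enters.
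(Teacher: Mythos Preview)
Your proof is correct and follows essentially the same approach as the paper: both derive the identity $\tr(W_v)(x,l)=\sum_{a\in C}\tr\bigl(l_a\cdot(\partial W/\partial a)(x)\bigr)$ from the degree-one homogeneity of $W$ with respect to the cut, and then invoke non-degeneracy of the trace pairing to conclude. Your derivation of the key identity is spelled out in more detail than the paper's (which simply records $\tr W=\tr\sum_{a\in C}(\partial W/\partial a)\,a$), but the logic is identical.
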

\begin{proof}
The lemma follows from the same proof of \cite[Lemma 3.1]{YZ2}. 
The difference in the current setting is the non-degenerate paring 
\[
\tr(- \cdot -): \Hom(\bbC^{v_{\inc(a)}},\bbC^{v_{\out(a)}})\times \Hom(\bbC^{v_{\out(a)}},\bbC^{v_{\inc(a)}})\to \bbC.
\] 
given by the trace. 
\Omit{
By definition, the potential $W$ is homogeneous of degree $1$. 
This implies 
\begin{equation}\label{equ:trW}
\tr W=\tr \sum_{a\in C} (\partial W/\partial a) a.
\end{equation}
Clearly $\bold{J}_{\Gamma \backslash C, v} \subseteq Z$. 

Conversely, for any $x\in Z$, we have $\tr(W_v) (x, l)=0$, for all $l=(l_a)_{a\in C} \in \bold{M}_{C, v}$. Then, for any $a\in C$, by the equality \eqref{equ:trW} we have $\tr((\partial W/\partial a)(x) \cdot  l_a)=0$, for any matrix $l_a$. 
Note that $\tr(A\cdot l_a)$ defines a non-degenerate paring 
\[
\tr(- \cdot -): \Hom(\bbC^{v_{\inc(a)}},\bbC^{v_{\out(a)}})\times \Hom(\bbC^{v_{\out(a)}},\bbC^{v_{\inc(a)}})\to \bbC.
\] 
This shows the vanishing $(\partial W/\partial a)(x)=0$. Therefore, $x\in \bold{J}_{\Gamma \backslash C, v}$.}
\end{proof}

Let $\calD$ be a torus $\Gm^r$ for some $r\in\bbN$. 
To each arrow in $\Gamma_1$, we associate a $\calD$-weight such that $\tr W_v$ is  $\calD$-invariant for any $v$. 
Then,  $\partial W/\partial a$ is homogeneous for any $a\in C$. In particular, $\bold{J}_{\Gamma\backslash C, v}$ is a $\calD$-equivariant subvariety of $\bfM_{\Gamma\backslash C, v}$.

There is a canonical isomorphism of vector spaces   \cite[Theorem A.1]{D}
\begin{align*}
H_{c, G_v\times
\calD}^*(\bold{M}_{\Gamma, v}, \varphi_{\tr W_v})^{\vee}&
\cong H^{\BM}_{G_v\times
\calD}(\bold{J}_{\Gamma\backslash C, v}\times \bold{M}_{C, v}
, \Q),  \,\ \text{for $v\in \bbN^{\Gamma_0}$}, 
\end{align*}
Following \cite{YZ2}, we now describe a multiplication $m^{\bold{J}}$ on the graded vector space
 \[
 \bigoplus_{v\in \N^{\Gamma_0}} H^{\BM}_{G_v\times
\calD}(\bold{J}_{\Gamma\backslash C, v}\times \bold{M}_{C, v}, \bbQ). 
 \]
  
 Let $v_1,v_2\in \bbN^{\Gamma_0}$ be dimension vectors such that $v=v_1+v_2$, let $V_1\subset V $ be a $|\Gamma_0|$-tuple of subspaces of $V$ with dimension vector $v_1$. 
Define $\bold{M}_{\Gamma, v_1, v_2}:=\{x\in \bold{M}_{\Gamma, v}\mid x(V_1)\subset V_1\}$.  We write $G:=G_{v}\times
\calD$ for short. Let $P\subset G_v\times
\calD$ be the parabolic subgroup preserving the subspace $V_1$ and $L:=G_{v_1}\times G_{v_2}\times
\calD$ be the Levi subgroup of  $P$. We have the following correspondence of $L$-varieties. 
\begin{equation}\label{basic corresp}
\xymatrix
{\bold{M}_{\Gamma, v_1}\times \bold{M}_{\Gamma, v_2}&\bold{M}_{\Gamma, v_1, v_2} \ar[l]_(0.4){p_{\Gamma}}\ar[r]^(0.4){\eta_{\Gamma}} &\bold{M}_{\Gamma, v_1+v_2},
}\end{equation}
where $p_{\Gamma}$ is the natural projection and $\eta_{\Gamma}$ is the embedding. 
\Omit{
Similarly, we have correspondences 
\xymatrix
{\bold{M}_{C, v_1}\times \bold{M}_{C, v_2}& \bold{M}_{C, v_1, v_2}\ar[l]_(0.4){p_{C}}\ar[r]^(0.4){\eta_{C}} &\bold{M}_{C, v},
} and 
$\bold{M}_{\Gamma\backslash C, v_1}\times \bold{M}_{\Gamma\backslash C, v_2} \leftarrow \bold{M}_{\Gamma\backslash C, v_1, v_2} \rightarrow \bold{M}_{\Gamma\backslash C, v}$. }

We have the following commutative diagram
\begin{equation}
\label{diag:j23}
\xymatrix@R=1.5em   {
\bold{M}_{\Gamma, v_1}\times \bold{M}_{\Gamma, v_2} & 
(\bold{M}_{\Gamma \backslash C, v_1}\times \bold{M}_{\Gamma \backslash C, v_2})\times
\bold{M}_{C, v_1, v_2}
\ar[l]_(0.6){p_1}\ar[r]^{i_1}
& 
(\bold{M}_{\Gamma \backslash C, v_1}\times \bold{M}_{\Gamma \backslash C, v_2}) \times 
\bold{M}_{C, v}\\
{
\begin{matrix}
(\bold{J}_{\Gamma \backslash C, v_1}\times \bold{J}_{\Gamma \backslash C, v_2})\\
\times
(\bold{M}_{C, v_1}\times \bold{M}_{C, v_2} )\ar@{^{(}->}[u]
\end{matrix}
}
& 
(\bold{J}_{\Gamma \backslash C, v_1}\times \bold{J}_{\Gamma \backslash C, v_2})\times
\bold{M}_{C, v_1, v_2}
\ar[l]_(0.6){\overline{p_1}}\ar[r]^{\overline{i_1}}\ar@{^{(}->}[u]
& 
(\bold{J}_{\Gamma \backslash C, v_1}\times \bold{J}_{\Gamma \backslash C, v_2}) \times 
\bold{M}_{C, v}\ar@{^{(}->}[u]
}
\end{equation}
where $p_1=\id_{\bold{M}_{\Gamma \backslash C, v_1}\times \bold{M}_{\Gamma \backslash C, v_2}} \times p_{C}$, and $i_1=\id_{\bold{M}_{\Gamma \backslash C, v_1}\times \bold{M}_{\Gamma \backslash C, v_2}} \times \eta_{C}$. 
Here $p_C, \eta_C$ are maps in the correspondence $\bold{M}_{C, v_1}\times \bold{M}_{C, v_2}\xleftarrow{p_{C}} \bold{M}_{C, v_1, v_2} \xrightarrow{\eta_{C}} \bold{M}_{C, v}$. The vertical maps are natural inclusions, and 
$\overline{p_1}, \overline{i_1}$ are the restrictions of $p_1, i_1$. 

Identify $\bold{M}_{C^{\op}, v}$ with $\bold{M}_{C, v}^*$ via $
\bold{M}_{C^{\op}, v}\cong \bold{M}_{C, v}^*, x\mapsto \big(y \mapsto \tr(x  y)\big)$. 
For $x \in \bold{M}_{\Gamma \backslash C, v}$,  the cyclic derivative $\partial W/ \partial a (x)$ is an element in $\bold{M}_{C^{\op}, v}$, for any $a\in C$. 
Thus, for any $l\in \bold{M}_{C, v}$, we have the pairing $(\partial W/ \partial a (x), l)=tr( \partial W/ \partial a (x) \cdot l) $.

Recall that $p_{C^{\op}}: \bold{M}_{C^{\op}, v_1,v_2}\to \bold{M}_{C^{\op}, v_1}\times \bold{M}_{C^{\op}, v_2}$ is the natural projection. 
Introduce the following subvariety
 $\bold{Y}\subset  \bold{M}_{\Gamma \backslash C, v_1}\times \bold{M}_{\Gamma \backslash C, v_2} \times \bold{M}_{C^{\op}, v_1,v_2}$. 
\begin{align}
\bold{Y}:=
&\{(x, l)\mid  x \in \bold{M}_{\Gamma \backslash C, v_1}\times \bold{M}_{\Gamma \backslash C, v_2}, l \in \bold{M}_{C^{\op}, v_1,v_2},   \,\ \text{such that $({\partial W}/{\partial a})_{a\in C}(x)=p_{C^{\op}}(l)$}\}.  \label{eq:varietyY}
\end{align}
There are two maps
\begin{align*}
&\iota: \bold{M}_{\Gamma \backslash C, v_1}\times \bold{M}_{\Gamma \backslash C, v_2} \inj \bold{Y}, \text{given by} \,\ x \mapsto (x, ({\partial W}/{\partial a})_{a\in C}(x)) \\
&\omega: \bold{M}_{\Gamma \backslash C, v_1, v_2} \to Y, \,\   \text{given by} \,\  x\mapsto (p_{\Gamma \backslash C}(x), ({\partial W}/{\partial a})_{a\in C}(x) ).
\end{align*}
Let $\bold{J}_{\Gamma \backslash C, v_1, v_2} \subset \bold{M}_{\Gamma \backslash C, v_1, v_2}$ be the subvariety defined by the equation $({\partial W}/{\partial a})_{a\in C}(x)$, for all  $a\in C$. We then have an embedding $\overline{i_2}: \bold{J}_{\Gamma \backslash C, v_1, v_2}\subset  \bold{J}_{\Gamma \backslash C, v}$. 
They fit into the following commutative diagram.
\begin{equation}\label{corresp with fiber}
\xymatrix@R=1.5em@C=2.5em{
\bold{M}_{\Gamma \backslash C, v_1} \times \bold{M}_{\Gamma \backslash C, v_2}
\ar@{^{(}->}[r]^(0.7){\iota}&\bold{Y}&
\bold{M}_{\Gamma \backslash C, v_1, v_2} 
\ar[l]_(0.6){\omega} \ar[r]^{i_2}&
\bold{M}_{\Gamma \backslash C, v} 
\\
\bold{J}_{\Gamma \backslash C, v_1}\times \bold{J}_{\Gamma \backslash C, v_2}
\ar@{^{(}->}[u] \ar[ur]_{\bar{\iota}}
&&
\bold{J}_{\Gamma \backslash C, v_1, v_2} 
\ar[ll]_{\overline{\omega}}\ar[r]^{\overline{i_2}}\ar@{^{(}->}[u]&
\bold{J}_{\Gamma \backslash C, v} \ar@{^{(}->}[u]
}
\end{equation}
where the map $\overline{\omega}$ is the restriction of $\omega$. Note that by introducing the variety $\bold{Y}$, 
the pullback of the two maps $\omega: \bold{M}_{\Gamma \backslash C, v_1, v_2} \to \bold{Y}$ and $\bar{\iota}: \bold{J}_{\Gamma \backslash C, v_1}\times \bold{J}_{\Gamma \backslash C, v_2}\to \bold{Y}$ is $\bold{J}_{\Gamma \backslash C, v_1, v_2} $. In other words, the square in the diagram \eqref{corresp with fiber} formed by $\omega, \bar{\iota}, \bar{\omega}$ is Cartesian. 
\Omit{In other words, there is a natural isomorphism $$(\bold{J}_{\Gamma\backslash C, v_1}\times \bold{J}_{\Gamma\backslash C, v_2})\times_{\bold{Y}} \bold{M}_{\Gamma\backslash C, v_1, v_2}\cong \bold{J}_{\Gamma\backslash C, v_1, v_2}.$$
Indeed,
\begin{align*}
&(\bold{J}_{\Gamma\backslash C, v_1}\times \bold{J}_{\Gamma\backslash C, v_2})\times_{\bold{Y}} \bold{M}_{\Gamma\backslash C, v_1, v_2}\\
=&
\{
(x_1, x_2)\in \bold{J}_{\Gamma\backslash C, v_1}\times \bold{J}_{\Gamma\backslash C, v_2}, 
x \in \bold{M}_{\Gamma\backslash C, v_1, v_2} \mid 
p_{\Gamma\backslash C}(x)=(x_1, x_2),  ({\partial W}/{\partial a})_{a\in C}(x)=0\}\\
=&\{ x \in \bold{M}_{\Gamma\backslash C, v_1, v_2} \mid 
({\partial W}/{\partial a})_{a\in C}(x)=0\}
=\bold{J}_{\Gamma\backslash C, v_1, v_2}.
\end{align*}}
The multiplication $m^{\bold{J}}$ is defined to be 
\begin{equation}
m^{\bold{J}}:=(\overline{i_{2}}\times \id_{\bold{M}_{C,v}} )_*  \circ \frac{1}{e(\iota)}(\omega\times \id_{\bold{M}_{C,v}})_{\overline{\omega}\times \id_{\bold{M}_{C,v}}}^{\sharp} \circ \overline{i_{1}}_* \circ \overline{p_1}^{*}.
\end{equation}
The maps in the composition are the following. 
\begin{enumerate}
\item The K\"unneth morphism
$
H^{\BM}_{G_{v_1}\times
\calD}(\bold{J}_{\Gamma\backslash C, v_1} \times \bold{M}_{C, v_1})
\otimes H^{\BM}_{G_{v_2}\times
\calD}(\bold{J}_{\Gamma\backslash C, v_2} \times \bold{M}_{C, v_2})
 \to 
H^{\BM}_{L}(\bold{J}_{\Gamma\backslash C, v_1}\times \bold{J}_{\Gamma\backslash C, v_2} \times \bold{M}_{C, v_1}\times  \bold{M}_{C, v_2}). 
$ Here the tensor is over $H_\calD^{\BM}(\pt)$.
\item 
$\overline{i_{1}}_* \circ \overline{p_1}^{*}: 
H^{\BM}_{L}(\bold{J}_{\Gamma\backslash C, v_1}\times \bold{J}_{\Gamma\backslash C, v_2}  \times \bold{M}_{C, v_1}\times  \bold{M}_{C, v_2})\to
H^{\BM}_{L}(\bold{J}_{\Gamma\backslash C, v_1} \times \bold{J}_{\Gamma\backslash C, v_2}  \times \bold{M}_{C, v}). 
$
\item 
Denote by $(\omega\times \id_{\bold{M}_{C,v}})_{\overline{\omega}\times \id_{\bold{M}_{C,v}}}^{\sharp}$ the refined Gysin pullback of $\omega\times\id_{\bold{M}_{C,v}}$ along $\overline{\omega}\times\id_{\bold{M}_{C,v}}$. Let $e(\iota)$ be the  $L$-equivariant Euler class of the normal bundle of $\iota$. 
We have the following map
\[
\frac{1}{e(\iota)}\omega_{\overline{\omega}}^{\sharp}: H^{\BM}_{L}( 
\bold{J}_{\Gamma\backslash C, v_1}\times \bold{J}_{\Gamma\backslash C, v_2} \times \bold{M}_{C, v})
\to H^{\BM}_{L}( \bold{J}_{\Gamma\backslash C, v_1, v_2} \times \bold{M}_{C, v} )[\frac{1}{e(\iota)}]. 
\] 
\item
The pushforward $
(\overline{i_{2}}\times \id_{\bold{M}_{C,v}} )_*: 
H^{\BM}_{L}( \bold{J}_{\Gamma\backslash C, v_1, v_2} \times \bold{M}_{C, v} )\to 
H^{\BM}_{L}( \bold{J}_{\Gamma\backslash C, v} \times \bold{M}_{C, v} )
$. 
\item
Pushforward along 
$G\times_{P} (\bold{J}_{\Gamma\backslash C, v} \times \bold{M}_{C, v})\to \bold{J}_{\Gamma\backslash C, v} \times \bold{M}_{C, v}, (g, m)\mapsto gmg^{-1}$,
 we get
$H^{\BM}_{P}( \bold{J}_{\Gamma\backslash C, v} \times \bold{M}_{C, v} )
\cong 
H^{\BM}_{G}( G\times_P(\bold{J}_{\Gamma\backslash C, v} \times \bold{M}_{C, v} ))
\to 
H^{\BM}_{G}( \bold{J}_{\Gamma\backslash C, v} \times \bold{M}_{C, v} )
$.
\end{enumerate}
This map $m^{\bold{J}}$ {\it a priori} is only defined after inverting $e(\iota)$. However, it follows from Theorem~\ref{thm:Hall} that it is  well-defined before localization. 

\begin{theorem}\label{thm:Hall}
There is an isomorphism of algebras 
\[
\calH_\calD(\Gamma, W)\cong \bigoplus_{v\in \bbN^{\Gamma_0}} H^{\BM}_{G_v\times
\calD}(\bold{J}_{\Gamma\backslash C, v} \times \bold{M}_{C, v}, \bbQ),
\] where $\calH_\calD(\Gamma, W)$ endowed with the Hall multiplication of Kontsevich-Soibelman and $\bigoplus_{v\in \bbN^{\Gamma_0}} H^{\BM}_{G_v\times
\calD}(\bold{J}_{\Gamma\backslash C, v} \times \bold{M}_{C, v}, \bbQ)$ has multiplication given by $m^{\bold{J}}$. 
\end{theorem}
As has been mentioned, a special case of this is \cite[Theorem 2.5]{YZ2} and \cite[Appendix, Corollary 4.5]{RS}.
The proof of  \cite[\S~2]{YZ2} goes through in the setting verbatim, with the following substitutions. 
\begin{enumerate}
\item We need to distinguish between $C$ and $C^{\op}$ in the present paper, where in \cite{YZ2},  $C$ consists of edge loops and hence $C=C^{\op}$. 
\item Consequently we need to identify $\bold{M}_{C^{\op}, v}$ with $\bold{M}_{C, v}^*$ using the trace map. 
\item The variety $\bold{Y}$ is changed to \eqref{eq:varietyY}.
\item The \cite[Lemma 3.1]{YZ2} is replaced by Lemma \ref{lem:Z_J}.  
\item The torus $\calD$-action in the present generality is given in \S~\ref{subsec:weights}. 
\end{enumerate}

\newcommand{\arxiv}[1]
{\texttt{\href{http://arxiv.org/abs/#1}{arXiv:#1}}}
\newcommand{\doi}[1]
{\texttt{\href{http://dx.doi.org/#1}{doi:#1}}}
\renewcommand{\MR}[1]
{\href{http://www.ams.org/mathscinet-getitem?mr=#1}{MR#1}}


\begin{thebibliography}{00}

 \bibitem[BG11]{BG11} 
A.~Berenstein, J.~Greenstein, {\em Quantum folding}.
Int. Math. Res. Not. IMRN 2011, no. 21, 4821--4883.

 \bibitem[BZ19]{BZ} D.~Bykov, P.~Zinn-Justin, {\em 
Higher spin $sl_2$ R-matrix from equivariant (co)homology}. \arxiv{1904.11107} 


 \bibitem[C15]{Cec} S. Cecotti, {\em The quiver approach to the BPS spectrum of a 4d N = 2 gauge theory}. String-Math
2012, 317, Proc. Sympos. Pure Math., 90, Amer. Math. Soc., Providence, RI, 2015.

\bibitem[CD12]{CD} S. Cecotti, M. Del Zotto, {\em 4d N = 2 gauge theories and quivers: the non-simply laced case}. J. High
Energy Phys. 2012, no. 10, 190, front matter + 34 pp.

\bibitem[D17]{D} B. Davison, {\em The critical COHA of a quiver with potential}, The Quarterly Journal of Mathematics,
Volume 68, Issue 2, 1 June 2017, Pages 635--703. \arxiv{1311.7172}

\bibitem[GLS17]{GLS} C. Gei{\ss}, B. Leclerc, J. Schr\"oer, 
{\em Quivers with relations for symmetrizable Cartan matrices I : Foundations}, Inventiones mathematicae, (2017), Vol {\bf 209}, Issue 1, pp 61--158. \arxiv{1410.1403}

\bibitem[G06]{Ginz} V. Ginzburg, {\em Calabi-Yau algebras}, preprint, (2006). \arxiv{math/0612139}

\bibitem[GTL13]{GTL} S. Gautam, V. Toledano Laredo, {\em Yangians and quantum loop algebras}. Selecta Mathematica 19 (2013) no. 2, 271-336. 

\bibitem[GKM98]{GKM98} M. Goresky, R. Kottwitz, R. MacPherson, {\em Equivariant cohomology, Koszul duality, and the localization theorem}. Invent. Math. 131 (1998), no. 1, 2583.

\bibitem[HL16]{HL} D. Hernandez, B. Leclerc, {\em A cluster algebra approach to q-characters of Kirillov-Reshetikhin modules}, 
J. Eur. Math. Soc. (JEMS) 18 (2016), no. 5, 1113--1159.

\bibitem[KS11]{KS} M.~Kontsevich, Y.~Soibelman,
{\em Cohomological Hall algebra, exponential Hodge structures and motivic Donaldson-Thomas invariants},
Commun. Number Theory Phys. \textbf{5} (2011), no. 2, 231--352.
\MR{2851153}


\bibitem[Mirk14]{Mirk14} I. Mirkovi\'c, {\em  The loop Grassmannians in the framework of local spaces over a curve },
Recent advances in representation theory, quantum groups, algebraic geometry, and related topics,
215--226, Contemp. Math., \textbf{623}, Amer.
Math. Soc., Providence, RI, 2014. 
\MR{3288629}

\bibitem[MYZ18]{MYZ} I. Mirkovi\'c,  Y. Yang, and G. Zhao, {\em Loop Grassmannians of quivers and affine quantum groups}. 25 pages, (2018). \arxiv{1810.10095}

\bibitem[Nak01]{Nak99}  H.~Nakajima, {\em Quiver varieties and finite dimensional
representations of quantum affine algebras},  J. Amer. Math. Soc. \textbf{14} (2001), no. {\bf 1}, 145--238. \MR{1808477} \arxiv{9912158}

\bibitem[NW19]{NW}  
H.~Nakajima, A.~Weekes, {\em Coulomb branches of quiver gauge theories with symmetrizers}, \arxiv{1907.06552}. 

\bibitem[NS09]{NS} N. Nekrasov and S. Shatashvili, {\em Supersymmetric vacua and Bethe
ansatz}, Nuclear Phys. B Proc. Suppl. 192/193 (2009), 91--112.

\bibitem[RSYZ20]{RSYZ2} M. Rap\v c\'ak, Y. Soibelman, Y. Yang, G. Zhao {\em Cohomological Hall algebras and perverse coherent sheaves on toric Calabi-Yau 3-folds}. \arxiv{2007.13365}. 

\bibitem[RS]{RS}  J. Ren and Y. Soibelman, {\em Cohomological Hall algebras, semicanonical bases and Donaldson-Thomas invariants for 2-dimensional Calabi-Yau categories (with an appendix by Ben Davison)}. Algebra, geometry, and physics in the 21st century, 261--293, Progr. Math., 324, Birkh\"auser/Springer, Cham, 2017.

\bibitem[SV17a]{SV17a} 
O.~Schiffmann, E.~Vasserot, {\em On cohomological Hall algebras of quivers: generators}. 
J. Reine Angew. Math. 760 (2020), 59--132.

\bibitem[SV17b]{SV17b} 
O.~Schiffmann, E.~Vasserot, {\em On cohomological Hall algebras of quivers : Yangians}. Preprint, \arxiv{1705.07491}. 

\bibitem[V00]{Var} M. Varagnolo, {\em Quiver Varieties and Yangians}, Lett. Math. Phys. 53 (2000), no. 4, 273--283. \MR{1818101}

\bibitem[YZ14]{YZ1}  Y.~Yang, G.~Zhao,
{\em Cohomological Hall algebra of a preprojective algebra}, Proc. Lond. Math. Soc. 116, 1029--1074, 2018. \arxiv{1407.7994}


\bibitem[YZ16]{YZ2}  Y.~Yang, G.~Zhao, {\em On two cohomological Hall algebras},  Proc. Roy. Soc. Edinburgh Sect. A., to appear.  \arxiv{1604.01477}.

\end{thebibliography}
\end{document}